\newtheorem{theorem}{Theorem}
\newtheorem{lemma}{Lemma}
\newtheorem{corollary}{Corollary}
\newtheorem{proposition}{Proposition}
\newtheorem{remark}{Remark}
\title[A nonlinear crack - defect interaction problem]{
A shape-topological control problem for nonlinear crack - defect interaction:\\   
the anti-plane variational model}
\author{Victor A. Kovtunenko$^1$}
\thanks{$^1$ Institute for Mathematics and Scientific Computing, 
Karl-Franzens University of Graz, NAWI Graz, Heinrichstr.36, 8010 Graz, Austria; 
and
Lavrent'ev Institute of Hydrodynamics, 630090 Novosibirsk, Russia
(E-mail: victor.kovtunenko@uni-graz.at)}
\author{G\"unter Leugering$^2$}
\thanks{$^2$ Applied Mathematics 2, 
Friedrich-Alexander University of Erlangen-N\"urnberg, 
Cauerstr.11, 91058 Erlangen, Germany
(E-mail: leugering@math.fau.de)}
\subjclass{35B25, 49J40, 49Q10, 74G70.}
\keywords{Shape-topological control, topological derivative, singular perturbation, 
variational inequality, crack-defect interaction, nonlinear crack with non-penetration, 
anti-plane stress intensity factor, strain energy release rate, dipole tensor}
\begin{document}

\begin{abstract}
We consider the shape-topological control of a singularly perturbed variational inequality. 
The geometry-dependent state problem that we address in this paper concerns 
a heterogeneous medium with a micro-object (defect) and a macro-object (crack) 
modeled in 2d. 

The corresponding nonlinear optimization problem subject to inequality constraints at
the crack is considered within a general variational framework. 
For the reason of asymptotic analysis, singular perturbation theory is applied
resulting in the topological sensitivity of an objective function representing the
release rate of the strain energy. 
In the vicinity of the nonlinear crack, the anti-plane strain energy release rate 
is expressed by means of the mode-III stress intensity factor, that is
examined with respect to small defects like micro-cracks, holes, and inclusions 
of varying stiffness. 
The result of shape-topological control is useful either for arrests or rise 
of crack growth. 
\end{abstract}

\maketitle

\section{Introduction}\label{sec0}

The paper aims at shape-topological control of geometry-dependent variational 
inequalities, which are motivated by application to non-linear cracking phenomena. 

From a physical point of view, both cracks and defects appear in heterogeneous 
media and composites in the context of fracture. 
We refer to \cite{MP/00} for a phenomenological approach to fracture with and
without defects. 
Particular cases for the linear model of a stress-free crack interacting with
inhomogeneities and micro-defects were considered in
\cite{Gon/95,MK/01,PMMM/12}. 
In the present paper we investigate the sensitivity of a nonlinear crack 
with respect to a small object (called defect) of arbitrary physical and
geometric nature. 

While the classic model of a crack is assumed linear, the physical consistency
needs nonlinear modeling. 
Nonlinear crack models subject to non-penetration (contact) conditions have
been developed in \cite{FHLRS/09,KK/00,KS/99,KS/00,Kov/01,KK/07} 
and other works by the authors. 
Recently, nonlinear cracks were bridged with thin inclusions under non-ideal
contact, see \cite{IKRT/12,KL/10,KLSN/12}. 
In the present paper we confine ourselves to the anti-plane model
simplification, in which case inequality type constraints at
the plane crack are argued in \cite{KKT/10,KK/08}. 
The linear crack is included there as the particular case. 

From a mathematical point of view, a topology perturbation problem is
considered by varying defects posed in a cracked domain. 
For shape and topology optimization of cracks we refer to \cite{AKLP/10,AHM/05,FV/89} 
and to \cite{SZ/92} for shape perturbations in a general context. 
As the size of the defect tends to zero, we have to employ singular perturbation theory. 
The respective asymptotic methods were developed in
\cite{AJT/04,Il'/92,MNP/00}, mostly for linear partial differential
equations (PDE) stated in singularly perturbed domains. 
Nevertheless, nonlinear boundary conditions are admissible to impose at those
boundaries which are separated from the varying object, as it is described in
\cite{AS/03,FLSS/07}. 

From the point of view of shape and topology optimization, we investigate a
novel setting of interaction problems between dilute geometric objects. 
In a broad scope, we consider a new class of geometry-dependent
objective functions $J$ which are perturbed by at least two interacting objects
$\Gamma$ and $\omega$ such that 
\begin{equation*}
J: \{\Gamma\}\times\{\omega\}\mapsto\mathbb{R},\quad J=J(\Gamma,\omega). 
\end{equation*}
In particular, we look how a perturbation of one geometric object, say $\omega$,
will affect a topology sensitivity, here the derivative of $J$ with respect to
another geometric object $\Gamma$. 
In our particular setting of the interaction problem 
the symbol $\Gamma$ refers to a crack
and $\omega$ to an inhomogeneity (defect) in a heterogeneous medium. 

The principal difficulty is that $\Gamma$ and $\omega$ enter the objective $J$ in 
a fully implicit way through a solution of a state (PDE) geometry-dependent problem. 
Therefore, to get an 
explicit formula, we rely on asymptotic modeling concerning the smallness of  $\omega$. 
Moreover, we generalize the state problem by allowing it to be a variational inequality. 
In fact, the variational approach to the perturbation problem allows us to
incorporate nonlinear boundary conditions stated at the crack $\Gamma$. 

The outline of the paper is as follows. 

To get an insight into the mathematical problem, 
in Section~\ref{sec1} we start with a general concept of shape-topological 
control for singular perturbations of abstract variational inequalities. 
In Sections~\ref{sec2} and \ref{sec3} this concept is specified 
for the nonlinear dipole problem of crack-defect interaction in 2d. 

For the anti-plane model introduced in Section~\ref{sec2}, further in Section~\ref{sec3} 
we provide the topological sensitivity of an objective function expressing 
the strain energy release rate $J_{\rm SERR}$ by means of the mode-III stress 
intensity factor $J_{\rm SIF}$ which is of primary importance for engineers. 
The first order asymptotic term determines the so-called {\em topological
derivative} of the objective function with respect to diminishing defects like
holes and inclusions of varying stiffness. 
We prove its semi-analytic expression by using a dipole representation of the
crack tip - the defect center with the help of a Green type (weight) function. 
The respective dipole matrix is related inherently to polarization and virtual
mass matrices, see \cite{PS/51}. 

Within an equivalent ellipse concept, see for example
\cite{BHV/03,PMMM/12}, we further derive explicit formulas of the
dipole matrix for the particular cases of the ellipse shaped defects. 
Holes and rigid inclusions are accounted here as the two limit cases of the
stiffness parameter $\delta\searrow+0$ and $\delta\nearrow\infty$, respectively 
(see Appendix~\ref{A}). 

The asymptotic result of shape-topological control is useful to force either 
shielding or amplification of an incipient crack by posing trial inhomogeneities 
(defects) in the test medium. 

\section{Shape-topological control}\label{sec1}

In the abstract context of shape-topological differentiability, 
see e.g. \cite{LSZ/14,LSZ/15}, our construction can be outlined as follows. 

We deal with variational inequalities of the type: 
Find $u^0\in K$ such that
\begin{equation}\label{1.1}
\begin{split}
&\langle A u^0 -G, v-u^0\rangle\ge0\quad\text{for all $v\in K$}
\end{split}
\end{equation}
with a linear strongly monotone operator $A: H\mapsto H^\star$, 
fixed $G\in H^\star$, and a polyhedric cone $K\subset H$, 
which are defined in a Hilbert space $H$ and its dual space $H^\star$. 
The solution of variational inequality \eqref{1.1} implies 
a metric projection $P_K: H^\star\mapsto K$, $G\mapsto u^0$. 
Its differentiability properties are useful in control theory, see \cite{LSZ/14,LSZ/15}. 

For control in the 'right-hand side' (the inhomogeneity) of \eqref{1.1}, 
one employs {\em regular perturbations} of $G$ 
with a small parameter $\varepsilon>0$ in the direction of $h\in H^\star$: 
Find $u^\varepsilon\in K$ such that
\begin{equation}\label{1.2}
\begin{split}
&\langle A u^\varepsilon -(G +\varepsilon h), v-u^\varepsilon\rangle\ge0
\quad\text{for all $v\in K$}.
\end{split}
\end{equation}
Then the directional differentiability of $P_K(G +\varepsilon h)$ from the right as 
$\varepsilon=+0$ implies the following linear asymptotic expansion 
\begin{equation}\label{1.3}
\begin{split}
&u^\varepsilon =u^0 +\varepsilon q +{\rm o}(\varepsilon)
\;\text{in $H$ as $\varepsilon\searrow+0$}
\end{split}
\end{equation}
with $q\in  S(u^0)$ uniquely determined on a proper convex cone $S(u^0)$, 
$K\subset S(u^0)\subset H$, and depending on $u^0$ and $h$, 
see \cite{LSZ/14,LSZ/15} for details. 

In contrast, our underlying problem implies {\em singular perturbations} 
and the control of the operator $A$ of \eqref{1.1}, namely: 
Find $u^\varepsilon\in K$ such that
\begin{equation}\label{1.4}
\begin{split}
&\langle A_\varepsilon u^\varepsilon -G, v-u^\varepsilon\rangle\ge0
\quad\text{for all $v\in K$},
\end{split}
\end{equation}
where $A_\varepsilon =A +\varepsilon F_\varepsilon$, with a
bounded linear operator $F_\varepsilon: H\mapsto H^\star$ such that 
$A_\varepsilon$ is strongly monotone, uniformly in $\varepsilon$, 
and $\varepsilon \|F_\varepsilon\|={\rm O}(\varepsilon)$. 
In this case, we arrive at the nonlinear  representation in $\varepsilon\searrow+0$
\begin{equation}\label{1.5}
\begin{split}
&u^\varepsilon =u^0 +\varepsilon \tilde{q}^\varepsilon +{\rm O}(f(\varepsilon))
\;\text{in $H$},\quad \varepsilon \|\tilde{q}^\varepsilon\| ={\rm O}(\varepsilon).  
\end{split}
\end{equation}
In \eqref{1.5} $\tilde{q}^\varepsilon$ depends on $u^0$ and $F_\varepsilon$. 
A typical example, $\tilde{q}^\varepsilon(x) 
=\tilde{q}\bigl({\textstyle\frac{x}{\varepsilon}}\bigr)$, implies the existence of a boundary layer, 
e.g. in homogenization theory. 
In contrast to the differential $q$ in \eqref{1.3}, a representative 
$\varepsilon \tilde{q}^\varepsilon$ is not uniquely defined by $\varepsilon$ 
but depends also on ${\rm o}(f(\varepsilon))$-terms. 
Examples are slant derivatives. 
The asymptotic behavior $f(\varepsilon)$ of the residual in \eqref{1.5} 
may differ for concrete problems. 
Thus, in the subsequent analysis $f(\varepsilon)=\varepsilon^2$ in 2d. 

In order to find the representative $\tilde{q}^\varepsilon$ in \eqref{1.5}, 
we suggest sufficient conditions \eqref{1.6}--\eqref{1.9} below.
\begin{proposition}\label{prop1.1}
If the following relations hold: 
\begin{eqnarray}
&u^0 +\epsilon \tilde{q}^\varepsilon\in K,\label{1.6}\\
&u^\varepsilon-\varepsilon \tilde{q}^\varepsilon\in K,\label{1.7}\\
&\langle A_\varepsilon \tilde{q}^\varepsilon +F_\varepsilon u^0 -R_\varepsilon, v\rangle =0
\quad\text{for all $v\in H$},\label{1.8}\\
&\varepsilon\|R_\varepsilon\| ={\rm O}(f(\varepsilon)),\label{1.9}
\end{eqnarray}
then \eqref{1.5} holds for the solutions of variational inequalities \eqref{1.1} and \eqref{1.4}. 
\end{proposition}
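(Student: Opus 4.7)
The plan is to exploit the two inequalities \eqref{1.1} and \eqref{1.4} with cleverly chosen admissible test elements furnished by hypotheses \eqref{1.6} and \eqref{1.7}, then eliminate the leading $\tilde q^\varepsilon$-contribution via the auxiliary linear identity \eqref{1.8}, and finally invoke uniform strong monotonicity of $A_\varepsilon$ together with \eqref{1.9} to absorb the remainder.

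\textbf{Step 1: admissible substitutions.} By \eqref{1.7}, the element $u^\varepsilon - \varepsilon \tilde q^\varepsilon$ lies in $K$, so it is admissible in \eqref{1.1}; by \eqref{1.6}, the element $u^0 + \varepsilon \tilde q^\varepsilon$ lies in $K$, so it is admissible in \eqref{1.4}. Plugging these choices in and adding, the common term $G$ cancels, and setting
$$\eta^\varepsilon := u^\varepsilon - u^0 - \varepsilon \tilde q^\varepsilon,$$
the two test differences are exactly $+\eta^\varepsilon$ and $-\eta^\varepsilon$. The sum therefore collapses to
$$\langle A_\varepsilon u^\varepsilon - A u^0, \eta^\varepsilon\rangle \le 0.$$

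\textbf{Step 2: eliminate $\tilde q^\varepsilon$ through \eqref{1.8}.} Writing $A_\varepsilon u^\varepsilon - A u^0 = A(u^\varepsilon - u^0) + \varepsilon F_\varepsilon u^\varepsilon$ and then $u^\varepsilon - u^0 = \eta^\varepsilon + \varepsilon \tilde q^\varepsilon$, the inequality of Step~1 becomes
$$\langle A\eta^\varepsilon, \eta^\varepsilon\rangle + \varepsilon \langle F_\varepsilon \eta^\varepsilon, \eta^\varepsilon\rangle + \varepsilon\langle A\tilde q^\varepsilon + F_\varepsilon u^0 + \varepsilon F_\varepsilon \tilde q^\varepsilon,\eta^\varepsilon\rangle \le 0.$$
The bracket in the last term equals $\langle A_\varepsilon \tilde q^\varepsilon + F_\varepsilon u^0, \eta^\varepsilon\rangle$, which by \eqref{1.8} (with $v=\eta^\varepsilon\in H$) equals $\langle R_\varepsilon, \eta^\varepsilon\rangle$. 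The first two terms together form $\langle A_\varepsilon \eta^\varepsilon, \eta^\varepsilon\rangle$, so we arrive at
$$\langle A_\varepsilon \eta^\varepsilon, \eta^\varepsilon\rangle \le -\varepsilon \langle R_\varepsilon, \eta^\varepsilon\rangle.$$

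\textbf{Step 3: coercive estimate.} The uniform strong monotonicity of $A_\varepsilon$ gives a constant $c>0$, independent of $\varepsilon$, with $c\|\eta^\varepsilon\|^2 \le \langle A_\varepsilon \eta^\varepsilon, \eta^\varepsilon\rangle$. Combined with Cauchy--Schwarz on the right-hand side and \eqref{1.9},
$$c\|\eta^\varepsilon\|^2 \le \varepsilon\|R_\varepsilon\|\,\|\eta^\varepsilon\| = \mathrm{O}(f(\varepsilon))\,\|\eta^\varepsilon\|,$$
so $\|\eta^\varepsilon\| = \mathrm{O}(f(\varepsilon))$, which is exactly the expansion \eqref{1.5}. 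The bound $\varepsilon\|\tilde q^\varepsilon\| = \mathrm{O}(\varepsilon)$ asserted in \eqref{1.5} then follows by testing \eqref{1.8} with $v=\tilde q^\varepsilon$, using strong monotonicity once more, and noting that $\|F_\varepsilon u^0\|\le \|F_\varepsilon\|\,\|u^0\|$ is uniformly bounded because $\varepsilon\|F_\varepsilon\|=\mathrm{O}(\varepsilon)$ forces $\|F_\varepsilon\|=\mathrm{O}(1)$, while $\varepsilon\|R_\varepsilon\|=\mathrm{O}(f(\varepsilon))$ is likewise controlled.

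\textbf{Anticipated obstacle.} The algebra is routine once the right substitutions are made; the only delicate point is recognising that \eqref{1.8} is designed precisely so that its residual $R_\varepsilon$ captures the discrepancy between the ``linearised equation'' satisfied by $\tilde q^\varepsilon$ and the true perturbation, and that the two cone-membership hypotheses \eqref{1.6}--\eqref{1.7} are tailored so that adding the two variational inequalities kills the obstacle-induced slack. No extra regularity of $K$ (e.g.\ polyhedricity) is needed here; everything reduces to Hilbert-space coercivity and the linear identity \eqref{1.8}.
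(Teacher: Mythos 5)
Your proof is correct and follows essentially the same route as the paper: both plug $v=u^\varepsilon-\varepsilon\tilde q^\varepsilon$ into \eqref{1.1} and $v=u^0+\varepsilon\tilde q^\varepsilon$ into \eqref{1.4}, sum to obtain $\langle A_\varepsilon u^\varepsilon - Au^0,\eta^\varepsilon\rangle\le0$, eliminate the $\tilde q^\varepsilon$-term via \eqref{1.8}, and then conclude by uniform coercivity of $A_\varepsilon$, Cauchy--Schwarz, and \eqref{1.9}. Your Step~2 expands $A_\varepsilon u^\varepsilon - Au^0$ slightly differently than the paper (which groups it as $A_\varepsilon(u^\varepsilon-u^0)+\varepsilon F_\varepsilon u^0$), but the two decompositions are equivalent and lead to the same inequality $\langle A_\varepsilon\eta^\varepsilon,\eta^\varepsilon\rangle\le-\varepsilon\langle R_\varepsilon,\eta^\varepsilon\rangle$; the only genuine addition you make is the explicit verification of $\varepsilon\|\tilde q^\varepsilon\|=\mathrm{O}(\varepsilon)$, which the paper asserts but does not spell out.
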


\begin{proof}
Indeed, plugging test functions $v=u^\varepsilon-\varepsilon \tilde{q}^\varepsilon\in K$ 
in \eqref{1.1} due to \eqref{1.7} and $v=u^0 +\varepsilon \tilde{q}^\varepsilon\in K$ in \eqref{1.4} due to \eqref{1.6}, after summation 
\begin{equation*}
\begin{split}
&\langle A_\varepsilon (u^\varepsilon -u^0) +\varepsilon F_\varepsilon u^0, 
u^\varepsilon -u^0 -\varepsilon \tilde{q}^\varepsilon\rangle\le0, 
\end{split}
\end{equation*} 
and substituting $v=u^\varepsilon -u^0 -\varepsilon \tilde{q}^\varepsilon$ in \eqref{1.8} 
multiplied by $-\varepsilon$, this yields 
\begin{equation*}
\begin{split}
&\langle A_\varepsilon (u^\varepsilon -u^0 -\varepsilon \tilde{q}^\varepsilon) 
+\varepsilon R_\varepsilon,
u^\varepsilon -u^0 -\varepsilon \tilde{q}^\varepsilon\rangle\le0. 
\end{split}
\end{equation*}
Applying here the Cauchy--Schwarz inequality together with \eqref{1.9} it 
shows \eqref{1.5} and completes the proof. 
\end{proof}

Our consideration aims at shape-topological control by means of 
mathematical programs with equilibrium constraints (MPEC): 
Find optimal parameters $p\in P$ from a feasible set $P$ such that
\begin{equation}\label{1.10}
\begin{split}
&\underset{p\in P}{\rm minimize}
\,J(u^{(\varepsilon,p)}) 
\quad\text{subject to $\Pi(u^{(\varepsilon,p)}) =\min_{v\in K_p} \Pi(v)$}.
\end{split}
\end{equation}
In \eqref{1.10} the functional $\Pi:H\mapsto\mathbb{R}$,  
$\Pi(v) :=\langle {\textstyle\frac{1}{2}} A_\varepsilon v -G, v\rangle$ 
represents the strain energy (SE) of the state problem, 
such that variational inequality \eqref{1.4} implies the first order optimality 
condition for the minimization of $\Pi(v)$ over $K_p\subset H$. 
The multi-parameter $p$ may include the right-hand side 
$G$, geometric variables, and other data of the problem. 
The optimal value function $J$ in \eqref{1.10} is motivated by underlying physics, 
which we will specify in examples below.

The main difficulty of the shape-topological control is that geometric parameters 
are involved in MPEC in fully implicit way. 
In this respect, relying on asymptotic models under small variations $\varepsilon$ 
of geometry is helpful to linearize the optimal value function. 
See e.g. the application of topological sensitivity to inverse scattering 
problems in \cite{KK/14}. 

In order to expand \eqref{1.10} in $\varepsilon\searrow+0$,  the uniform 
asymptotic expansion \eqref{1.5} is useful which, however, is varied by $f(\varepsilon)$. 
The variability of $f(\varepsilon)$ is inherent here due to non-uniqueness 
of a representative $\varepsilon\tilde{q}^\varepsilon$  
defined up to the ${\rm o}(f(\varepsilon))$-terms. 
As an alternative, developing variational technique related to Green functions 
and truncated Fourier series, in Section~\ref{sec3.2} we derive 
local asymptotic expansions in the near-field, which are uniquely determined.

Since Proposition~\ref{prop1.1} gives only sufficient conditions for \eqref{1.5}, 
in the following sections we suggest a method of topology perturbation 
finding the correction $\tilde{q}^\varepsilon$ for the underlying 
variational inequality. 

\section{Nonlinear problem of crack-defect interaction in 2d}\label{sec2}

We start with the 2d-geometry description. 

\subsection{Geometric configuration}\label{sec2.1}

For $x=(x_1,x_2)^\top\in\mathbb{R}^2$ we set the semi-infinite straight crack
$\Gamma_\infty =\{x\in\mathbb{R}^2: x_1<0, x_2=0\}$ with 
the unit normal vector $n=(0,1)^\top$ at $\Gamma_\infty$. 
Let $\Omega\subset\mathbb{R}^2$ be a bounded domain with the Lipschitz boundary
$\partial\Omega$ and the normal vector $n=(n_1,n_2)^\top$ at $\partial\Omega$. 
We assume that the origin $0\in\Omega$ and assign it to the tip of a finite crack
$\Gamma :=\Gamma_\infty\cap\Omega$. 
An example geometric configuration is drawn in Figure~\ref{fig_dipole}.
\begin{figure}[hbt!]
\begin{center}
\epsfig{file=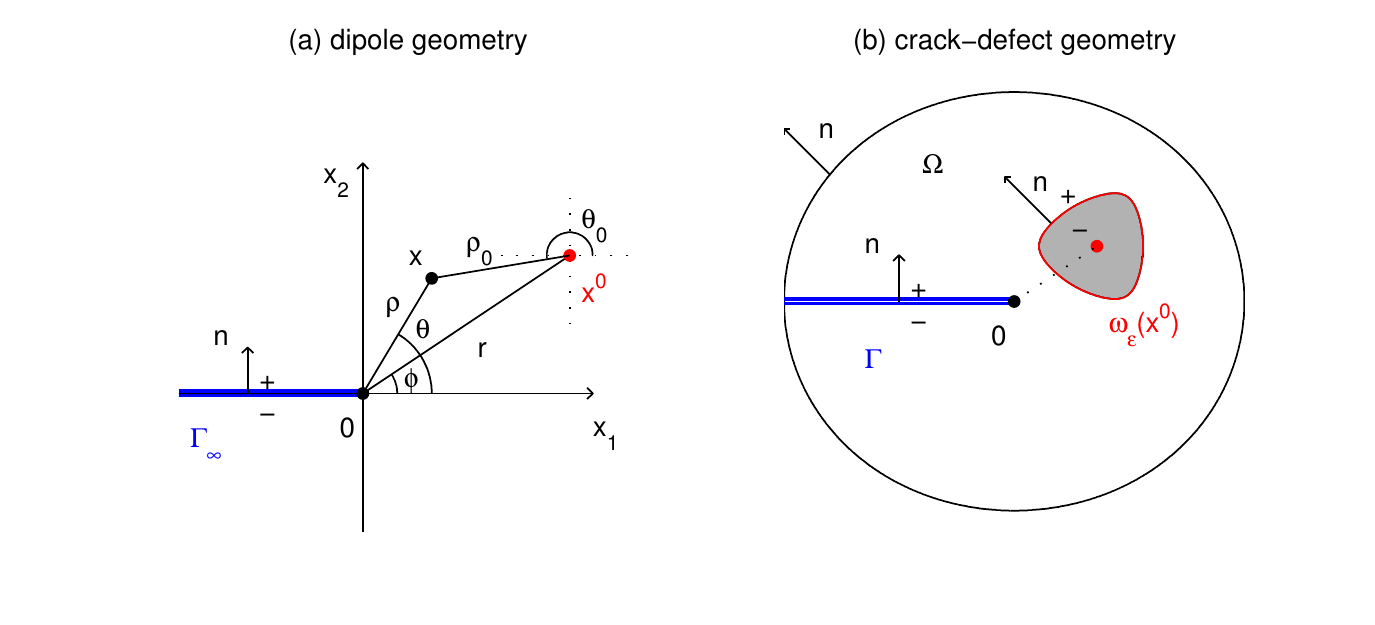,height=6cm,angle=0}
\caption{Example geometric configuration.}
\label{fig_dipole}
\end{center}
\end{figure}
%

Let $x^0$ be an arbitrarily fixed point in the cracked domain
$\Omega\setminus\Gamma$. 
We associate the poles $0$ and $x^0$ with two polar coordinate systems
$x=\rho(\cos\theta,\sin\theta)^\top$, $\rho>0$, $\theta\in[-\pi,\pi]$, and
$x-x^0=\rho_0(\cos\theta_0,\sin\theta_0)^\top$, $\rho_0>0$,
$\theta_0\in(-\pi,\pi]$. 
Here $x^0=r(\cos\phi,\sin\phi)^\top$ is given by $r>0$ and $\phi\in(-\pi,\pi)$
as it is depicted in Figure~\ref{fig_dipole} (a).
We refer $x^0$ to the center of a defect
$\omega_\varepsilon(x^0)$ posed in $\Omega$ as illustrated in
Figure~\ref{fig_dipole} (b). 

More precisely, let a trial geometric object be given by the compact set
$\omega_\varepsilon(x^0)=\{x\in\mathbb{R}^2:
\frac{x-x^0}{\varepsilon}\in\omega\}$ which is parametrized by an admissible
triple of the shape $\omega\in\Theta$, center $x^0\in\Omega\setminus\Gamma$, 
and size $\varepsilon>0$. 
Let $B_\rho(x^0)$ denote the disk around $x^0$ of radius $\rho$. 
For admissible shapes $\Theta$ we 
choose a domain $\omega$ such that $0\in\omega\subseteq B_1(0)$ 
and $\rho=1$ is the minimal radius among all bounding discs 
$B_\rho(0)\supset\omega$. 
Thus, the shapes are invariant to translations and isotropic scaling, so that
we express them with the equivalent notation $\omega=\omega_1(0)$. 
Admissible geometric parameters $(\omega,\varepsilon,x^0)
\in\Theta\times\mathbb{R}_+\times(\Omega\setminus\Gamma)$ 
should satisfy the consistency condition
$\omega_\varepsilon(x^0) \subset B_\varepsilon(x^0)
\subset\Omega\setminus\Gamma$. 

We note that the motivation of inclusion $\omega\subseteq B_1(0)$ 
(but not $\omega\supseteq B_1(0)$) is to separate the far-field 
$\mathbb{R}^2\setminus B_1(0)$ from the near-field $B_1(0)\setminus\omega$ 
of the object $\omega$.

In the following we assume that the Hausdorff measure ${\rm meas}_2(\omega)>0$, 
the boundary$\partial\omega_\varepsilon(x^0)$ is Lipschitz continuous 
and assign $n$ to the unit normal vector at $\partial\omega_\varepsilon(x^0)$ 
which points outward to $\omega_\varepsilon(x^0)$. 
In a particular situation, our consideration admits also the
degenerate case when $\omega_\varepsilon(x^0)$ shrinks to a 1d
Lipschitz manifold of co-dimension one in $\mathbb{R}^2$, thus, 
allowing for defects like curvilinear inclusions. 
The degenerate case will appear in more detail when shrinking ellipses to
line segments as described in Appendix~\ref{A}. 

\subsection{Variational problem}\label{sec2.2}

In the reference configuration of the cracked domain $\Omega\setminus\Gamma$
with the fixed inclusion $\omega_\varepsilon(x^0)$ we state a constrained
minimization problem related to PDE, here, a model problem with the scalar
Laplace operator. 
Motivated by 3d-fracture problems with possible contact between crack faces, as described in \cite{KKT/10}, in the anti-plane framework of linear
elasticity, we look for admissible displacements $u(x)$ in
$\Omega\setminus\Gamma$ which are restricted along the crack by the inequality
constraint 
\begin{equation}\label{2.1}
[\![u]\!] =u|_{\Gamma^+} -u|_{\Gamma^-}\ge0 \quad\text{on $\Gamma$}.
\end{equation}
The positive $\Gamma^+_\infty$ (hence, its part
$\Gamma^+=\Gamma^+_\infty\cap\Omega$) and the negative $\Gamma^-_\infty$ (hence,
$\Gamma^-=\Gamma^-_\infty\cap\Omega$) crack faces are distinguished as the
limit of points $(x_1,x_2)^\top$ for $x_1<0$ and $x_2\to0$, when $x_2>0$ and
$x_2<0$, respectively, see Figure~\ref{fig_dipole}. 

Now we get a variational formulation of a state problem due to the
unilateral constraint \eqref{2.1}. 

Let the external boundary $\partial\Omega$ consist of two disjoint parts
$\Gamma_N$ and $\Gamma_D$. 
We assume that the Dirichlet part has the positive measure 
${\rm meas}_1(\Gamma_D)>0$, 
otherwise we should exclude the nontrivial kernel (the rigid displacements) 
for coercivity of the objective functional $\Pi$ in \eqref{2.3} below. 
The set of admissible displacements contains functions $u$ from the Sobolev space 
\begin{equation*}
H(\Omega\setminus\Gamma) =\{u\in H^1(\Omega\setminus\Gamma):\;
u=0\quad\text{on $\Gamma_D$}\}
\end{equation*}
such that \eqref{2.1} holds:
\begin{equation*}
K(\Omega\setminus\Gamma) =\{u\in H(\Omega\setminus\Gamma):\;
[\![u]\!]\ge0\quad\text{on $\Gamma$}\}.
\end{equation*}
This is a convex cone in $H(\Omega\setminus\Gamma)$, 
moreover, a polyhedric cone, see \cite{LSZ/14,LSZ/15}. 
We note that the jump of the traces at $\Gamma$ is defined well in the
Lions--Magenes space $[\![u]\!]\in H^{1/2}_{00}(\Gamma)$, see \cite[Section~1.4]{KK/00}. 

Let $\mu>0$ be a fixed material parameter (the Lame constant) in the 
homogeneous reference domain $\Omega\setminus\Gamma$. 
We distinguish the inhomogeneity with the help of a variable parameter
$\delta>0$, such that the characteristic function is given by
\begin{equation}\label{2.2}
\chi^\delta_{{}_{\omega_\varepsilon(x^0)}}(x) :=1
-(1-\delta) \mathbf{1}_{{}_{\omega_\varepsilon(x^0)}}
=\left\{\begin{array}{ll} 1&,\;x\in\Omega\setminus\omega_\varepsilon(x^0)\\
\delta&,\;x\in\omega_\varepsilon(x^0) \end{array}\right..
\end{equation}
In the following we use the notation $\mu
\chi^\delta_{{}_{\omega_\varepsilon(x^0)}}$, which implies, due to \eqref{2.2},
the material parameter $\mu$ in the homogeneous domain
$\Omega\setminus\omega_\varepsilon(x^0)$, and the material parameter $\mu\delta$
in $\omega_\varepsilon(x^0)$ characterizing stiffness of the inhomogeneity. 
The parameter $\delta$ accounts for three physical situations: 
inclusions of varying stiffness for finite $0<\delta<\infty$, 
holes for $\delta\searrow+0$, and rigid inclusions for $\delta\nearrow+\infty$. 

For given boundary traction $g\in L^2(\Gamma_N)$, the strain energy 
of the heterogeneous medium is described by the functional 
$\Pi: H(\Omega\setminus\Gamma)\mapsto\mathbb{R}$,
\begin{equation}\label{2.3}
\begin{split}
&\Pi(u;\Gamma,\omega_\varepsilon(x^0)) :={\textstyle\frac{1}{2}}
\int_{\Omega\setminus\Gamma} \mu \chi^\delta_{{}_{\omega_\varepsilon(x^0)}}
|\nabla u|^2\,dx -\int_{\Gamma_N} gu\,dS_x, 
\end{split}
\end{equation}
which is quadratic and strongly coercive over $H(\Omega\setminus\Gamma)$. 
Henceforth, the Babu\v{s}ka--Lax--Milgram theorem guarantees the unique
solvability of the constrained minimization of $\Pi$ over
$K(\Omega\setminus\Gamma)$, which implies the variational formulation of the
{\em heterogeneous problem}: Find $u^{(\omega,\varepsilon,x^0,\delta)}\in
K(\Omega\setminus\Gamma)$ such that 
\begin{equation}\label{2.4}
\begin{split}
&\int_{\Omega\setminus\Gamma} \mu \chi^\delta_{{}_{\omega_\varepsilon(x^0)}}
(\nabla u^{(\omega,\varepsilon,x^0,\delta)})^\top \nabla(v
-u^{(\omega,\varepsilon,x^0,\delta)})\,dx\\
&\ge\int_{\Gamma_N} g(v -u^{(\omega,\varepsilon,x^0,\delta)}) \,dS_x
\quad\text{for all $v\in K(\Omega\setminus\Gamma)$}.
\end{split}
\end{equation}

The variational inequality \eqref{2.4} describes the weak solution of the
following boundary value problem: 
\begin{subequations}\label{2.5}
\begin{equation}\label{2.5a}
-\Delta u^{(\omega,\varepsilon,x^0,\delta)} =0\quad\text{in
$\Omega\setminus\Gamma$},
\end{equation}
\begin{equation}\label{2.5b}
u^{(\omega,\varepsilon,x^0,\delta)} =0\quad\text{on
$\Gamma_D$},\quad \mu {\textstyle\frac{\partial
u^{(\omega,\varepsilon,x^0,\delta)}}{\partial n}}
=g\quad\text{on $\Gamma_N$},
\end{equation}
\begin{equation}\label{2.5c}
\begin{split}
&\bigl[\!\!\bigl[ {\textstyle\frac{\partial
u^{(\omega,\varepsilon,x^0,\delta)}}{\partial n}} \bigr]\!\!\bigr] =0,\quad
[\![u^{(\omega,\varepsilon,x^0,\delta)}]\!]\ge0,\quad {\textstyle\frac{\partial
u^{(\omega,\varepsilon,x^0,\delta)}}{\partial n}}\le0,\\
&{\textstyle\frac{\partial u^{(\omega,\varepsilon,x^0,\delta)}}{\partial n}}
[\![u^{(\omega,\varepsilon,x^0,\delta)}]\!] =0
\quad\text{on $\Gamma$},
\end{split}
\end{equation}
\begin{equation}\label{2.5d}
\begin{split}
&{\textstyle\frac{\partial
u^{(\omega,\varepsilon,x^0,\delta)}}{\partial
n}}|_{{}_{\partial\omega_\varepsilon(x^0)^+}}
-\delta {\textstyle\frac{\partial
u^{(\omega,\varepsilon,x^0,\delta)}}{\partial
n}}|_{{}_{\partial\omega_\varepsilon(x^0)^-}} =0,\\
&[\![u^{(\omega,\varepsilon,x^0,\delta)}]\!]=0
\quad\text{on $\partial\omega_\varepsilon(x^0)$}.
\end{split}
\end{equation}
\end{subequations}
In \eqref{2.5d} the jump across the defect boundary is defined as 
\begin{equation}\label{2.6}
[\![u]\!] =u|_{{}_{\partial\omega_\varepsilon(x^0)^+}}
-u|_{{}_{\partial\omega_\varepsilon(x^0)^-}} \quad\text{on
$\partial\omega_\varepsilon(x^0)$},
\end{equation}
where $+$ and $-$ correspond to the chosen direction of the normal $n$, which
is outward to $\omega_\varepsilon(x^0)$, see Figure~\ref{fig_dipole} (b). 

We remark that the $L^2$-regularity of the normal derivatives at the boundaries 
$\Gamma_N$, $\Gamma$, and $\partial\omega_\varepsilon(x^0)$ 
is needed in order to have strong solutions in \eqref{2.5}. 
The exact sense to the boundary conditions \eqref{2.5c} can be given for the
traction $\frac{\partial u^{(\omega,\varepsilon,x^0,\delta)}}{\partial n}$ in
the dual space of $H^{1/2}_{00}(\Gamma)$, 
which is denoted by $H^{1/2}_{00}(\Gamma)^\star$, 
to \eqref{2.5b} for $\frac{\partial u^{(\omega,\varepsilon,x^0,\delta)}}{\partial n}$ 
in the dual space of $H^{1/2}_{00}(\Gamma_N)$, 
and to \eqref{2.5d} for $\frac{\partial u^{(\omega,\varepsilon,x^0,\delta)} 
}{\partial n} |_{{}_{\partial\omega_\varepsilon(x^0)^\pm}} 
\in H^{-1/2}(\partial\omega_\varepsilon(x^0))$.  
Moreover, the solution $u^{(\omega,\varepsilon,x^0,\delta)}$ is
$H^2$-smooth away from the crack tip, boundary of defect, and possible
irregular points of external boundary, for detail see 
\cite[Section~2]{KK/00}. 

If $\varepsilon\searrow+0$, similarly to \eqref{2.4} there exists the unique
solution of the {\em homogeneous problem}: Find
$u^0\in K(\Omega\setminus\Gamma)$ such that 
for all $v\in K(\Omega\setminus\Gamma)$ 
\begin{equation}\label{2.7}
\begin{split}
&\int_{\Omega\setminus\Gamma} \mu (\nabla u^0)^\top \nabla(v -u^0) \,dx
\ge\int_{\Gamma_N} g(v -u^0) \,dS_x,
\end{split}
\end{equation}
which implies the boundary value problem: 
\begin{subequations}\label{2.8}
\begin{equation}\label{2.8a}
-\Delta u^0 =0\quad\text{in $\Omega\setminus\Gamma$},
\end{equation}
\begin{equation}\label{2.8b}
u^0 =0\quad\text{on $\Gamma_D$},\quad 
\mu {\textstyle\frac{\partial u^0}{\partial n}}
=g\quad\text{on $\Gamma_N$},
\end{equation}
\begin{equation}\label{2.8c}
\begin{split}
&\bigl[\!\!\bigl[ {\textstyle\frac{\partial u^0}{\partial n}} \bigr]\!\!\bigr]
=0,\quad [\![u^0]\!]\ge0,\quad 
{\textstyle\frac{\partial u^0}{\partial n}}\le0,\quad
{\textstyle\frac{\partial u^0}{\partial n}}
[\![u^0]\!]=0 \quad\text{on $\Gamma$},
\end{split}
\end{equation}
\begin{equation}\label{2.8d}
\begin{split}
&\bigl[\!\!\bigl[ {\textstyle\frac{\partial u^0}{\partial n}} \bigr]\!\!\bigr]
=0,\quad [\![u^0]\!]=0 \quad\text{on $\partial\omega_\varepsilon(x^0)$}.
\end{split}
\end{equation}
\end{subequations}
We note that \eqref{2.8d} is written here for comparison with \eqref{2.5d}, and
it implies that the solution $u^0$ is $C^\infty$-smooth in
$B_\varepsilon(x^0)\supset \omega_\varepsilon(x^0)$ compared to
$u^{(\omega,\varepsilon,x^0,\delta)}$. 

Using Green's formulae separately in $(\Omega\setminus\Gamma)\setminus
\omega_\varepsilon(x^0)$ and in $\omega_\varepsilon(x^0)$, 
the variational inequality \eqref{2.7} can be transformed into an 
equivalent variational inequality depending on the parameter $\delta$: 
\begin{equation}\label{2.9}
\begin{split}
&\int_{\Omega\setminus\Gamma} \mu \chi^\delta_{{}_{\omega_\varepsilon(x^0)}}
(\nabla u^0)^\top \nabla(v -u^0) \,dx
\ge\int_{\Gamma_N} g(v -u^0) \,dS_x\\
&-(1-\delta)\int_{\partial\omega_\varepsilon(x^0)} 
\mu {\textstyle\frac{\partial u^0}{\partial n}} (v -u^0) \,dS_x
\quad\text{for all $v\in K(\Omega\setminus\Gamma)$}.
\end{split}
\end{equation}
The left hand side of \eqref{2.9} has the same operator as \eqref{2.4}, 
this fact will be used in Section~\ref{sec3} for asymptotic analysis of the 
solution $u^{(\omega,\varepsilon,x^0,\delta)}$. 

\section{Topology asymptotic analysis}\label{sec3}

To examine the heterogeneous state \eqref{2.4} in comparison with the
homogeneous one \eqref{2.7} in an explicit way,  we rely on small defects, thus
passing $\varepsilon\searrow+0$  leads to the first order asymptotic
analysis. 
First, for the solution of the state problem we obtain a two-scale asymptotic
expansion, which is related to Green functions. 
For this reason we apply the singular perturbation theory and endow it with
variational arguments. 
With its help, second, we provide a topology sensitivity of the geometry dependent
objective functions representing the mode-III stress intensity factor (SIF)
and the strain energy release rate (SERR) which are the primary physical
characteristics of fracture. 

\subsection{Asymptotic analysis of the solution}\label{sec3.1}

We start with the inner asymptotic expansion of the solution $u^0$ of the 
homogeneous variational inequality \eqref{2.7}, which is $C^\infty$-smooth 
in the ball $B_R(x^0)$ of the radius $R<\min\{r,{\rm dist}(x^0,\partial\Omega)\}$. 
We recall that $r$ is the distance of the defect center $x^0$ from the crack
tip at the origin $0$. 
Due to \eqref{2.8a}, we have the representation (see e.g. \cite[Section~3]{Il'/92}):
\begin{equation}\label{3.1}
\begin{split}
&u^0(x) =u^0(x^0) +\nabla u^0(x^0)^\top (x-x^0) +U_{{}^{x^0}} (x)
\quad\text{for $x\in B_R(x^0)$},\\
&\int_{-\pi}^\pi U_{{}^{x^0}}\,d\theta_0 =\int_{-\pi}^\pi
U_{{}^{x^0}}{\textstyle\frac{x-x^0}{\rho_0}}\,d\theta_0 =0,\quad
U_{{}^{x^0}} ={\rm O}\bigl(\rho_0^2\bigr),\; \nabla U_{{}^{x^0}} ={\rm O}(\rho_0).
\end{split}
\end{equation}
From \eqref{3.1} we infer the expansion of the traction
\begin{equation}\label{3.2}
\begin{split}
&{\textstyle\frac{\partial u^0}{\partial n}} =\nabla u_0(x^0)^\top n
+{\textstyle\frac{\partial U_{{}^{x^0}}}{\partial n}},\quad 
{\textstyle\frac{\partial U_{{}^{x^0}}}{\partial n}} ={\rm O}(\varepsilon) 
\quad\text{on $\partial\omega_\varepsilon(x^0)$}
\end{split}
\end{equation}
which will be used further for expansion of the right hand side in \eqref{2.9}. 

Moreover, to compensate the ${\rm O}(1)$-asymptotic term 
$\nabla u_0(x^0)^\top n$ in \eqref{3.2}, 
we will need to construct a boundary layer near $\partial\omega_\varepsilon(x^0)$. 
For this task, we stretch the coordinates as $y=\frac{x-x^0}{\varepsilon}$
which implies the diffeomorphic map $\omega_\varepsilon(x^0)
\mapsto\omega_1(0)\subset B_1(0)$. 
In the following, the stretched coordinates $y=(y_1,y_2)^\top 
=|y| (\cos\theta_0,\sin\theta_0)^\top$ refer always to the infinite domain. 
In the whole $\mathbb{R}^2$ we introduce the weighted Sobolev space 
\begin{equation*}
\begin{split}
&H^1_\nu(\mathbb{R}^2) =\{v:\; \nu v,\nabla v\in L^2(\mathbb{R}^2)\},
\quad \nu(y) ={\textstyle\frac{1}{|y|\ln|y|}} \quad\text{in
$\mathbb{R}^2\setminus B_2(0)$},
\end{split}
\end{equation*}
with the weight $\nu\in L^\infty(\mathbb{R}^2)$ due to the weighted Poincare
inequality in exterior domains, see \cite{AGG/97}. 
In this space, excluding constant polynomials $\mathbb{P}_0$,
we state the following auxiliary result, 
which is closely related to the generalized polarization tensors considered 
in \cite[Section~3]{AK/04}.

\begin{lemma}\label{lem3.1}
There exists the unique solution of the following variational problem: 
Find $w\in \bigl( H^1_\nu(\mathbb{R}^2) \setminus\mathbb{P}_0
\bigr)^2$, $w=(w_1,w_2)^\top(y)$, such that
\begin{equation}\label{3.3}
\begin{split}
&\int_{\mathbb{R}^2}\! \chi^\delta_{{}_{\omega_1(0)}}\!
\nabla w_i^\top \nabla v \,dy =(1-\delta) \!\int_{\partial \omega_1(0)}\!\!\!
n_i v\,dS_y \quad \text{for all $v\in H^1_\nu(\mathbb{R}^2)$},
\end{split}
\end{equation}
for $i=1,2$, which satisfies the Laplace equation in $\mathbb{R}^2
\setminus\partial\omega_1(0)$ and the
following transmission boundary conditions across $\partial\omega_1(0)$: 
\begin{equation}\label{3.4}
\begin{split}
&{\textstyle\frac{\partial w}{\partial n}}|_{\partial\omega_1(0)^+}
-\delta {\textstyle\frac{\partial w}{\partial n}}|_{\partial\omega_1(0)^-} 
=-(1-\delta)n,\; 
w|_{\partial\omega_1(0)^+} -w|_{\partial\omega_1(0)^-} =0.
\end{split}
\end{equation}
After rescaling, the far-field representation holds 
\begin{equation}\label{3.5}
\begin{split}
&w({\textstyle\frac{x-x^0}{\varepsilon}}) ={\textstyle\frac{\varepsilon}{2\pi}}
A_{(\omega,\delta)} {\textstyle\frac{x-x^0}{\rho_0^2}}
+W(x) \quad\text{for $x\in \mathbb{R}^2
\setminus B_\varepsilon(x^0)$},\\
&\int_{-\pi}^\pi W\,d\theta_0 =\int_{-\pi}^\pi W
{\textstyle\frac{x-x^0}{\rho_0}} \,d\theta_0 =0,\quad
W ={\rm O}\bigl( \bigl({\textstyle\frac{\varepsilon}{\rho_0}}\bigr)^2 \bigr), 
\;\nabla W ={\rm O}\bigl( {\textstyle\frac{\varepsilon^2}{\rho_0^3}} \bigr),
\end{split}
\end{equation}
where the dipole matrix $A_{(\omega,\delta)} \in{\rm
Sym}(\mathbb{R}^{2\times2})$ has entries ($i,j=1,2$):
\begin{equation}\label{3.6}
\begin{split}
&(A_{(\omega,\delta)})_{ij} =(1-\delta) \Bigl\{ \delta_{ij}\, {\rm
meas}_2(\omega_1(0)) +\int_{\partial\omega_1(0)}\!\!\! w_in_j\,dS_y  \Bigr\}. 
\end{split}
\end{equation}
Moreover, $A_{(\omega,\delta)} \in{\rm Spd}(\mathbb{R}^{2\times2})$ if
$\delta\in[0,1)$  and ${\rm meas}_2(\omega_1(0))>0$. 
\end{lemma}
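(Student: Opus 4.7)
The plan is to establish the four distinct claims of the lemma in order: (i) existence and uniqueness on the quotient space, (ii) the PDE and transmission conditions~\eqref{3.4}, (iii) the far-field dipole representation~\eqref{3.5}, and (iv) the symmetry and positive definiteness of $A_{(\omega,\delta)}$.

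\textbf{Existence and strong form.} I would apply the Lax--Milgram theorem to each component $w_i$ on the Hilbert quotient $H^1_\nu(\mathbb{R}^2)/\mathbb{P}_0$. The bilinear form $a(u,v):=\int_{\mathbb{R}^2}\chi^\delta_{\omega_1(0)}\nabla u^\top\nabla v\,dy$ is bounded by $\max(1,\delta)$ and, thanks to the uniform lower bound $\chi^\delta\ge\min(1,\delta)>0$ together with the weighted Poincar\'e inequality in exterior domains from~\cite{AGG/97}, is coercive on the quotient. The right-hand side $v\mapsto(1-\delta)\int_{\partial\omega_1(0)}n_iv\,dS_y$ is continuous by the trace theorem on any bounded Lipschitz neighbourhood of $\omega_1(0)$, where $\nu$ is bounded away from $0$ and $\infty$. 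To recover~\eqref{3.4}, plugging in test functions supported in either $\omega_1(0)$ or its complement yields $\Delta w_i=0$ distributionally in each subdomain; then integration by parts on a ball $B_R(0)\supset\omega_1(0)$ isolates the jump $\partial_n w|_{\partial\omega_1(0)^+}-\delta\,\partial_n w|_{\partial\omega_1(0)^-}$ and matches it with $-(1-\delta)n$, while $[w]=0$ is automatic from $w\in H^1$ across $\partial\omega_1(0)$.

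\textbf{Far-field expansion.} Since $w_i$ is harmonic in $\mathbb{R}^2\setminus\overline{B_1(0)}$, separation of variables in polar coordinates gives
\begin{equation*}
w_i(y)=c_0^{(i)}+\sum_{k\ge 1}|y|^{-k}\bigl(\alpha_k^{(i)}\cos k\theta_0+\beta_k^{(i)}\sin k\theta_0\bigr),
\end{equation*}
where positive-power harmonics and a $\log|y|$ term are excluded because their gradients would fail to be in $L^2(\mathbb{R}^2)$, and $c_0^{(i)}$ is eliminated by the quotient by $\mathbb{P}_0$. The $k=1$ mode supplies, after rescaling $y=(x-x^0)/\varepsilon$, the stated dipole term $\frac{\varepsilon}{2\pi}A_{(\omega,\delta)}(x-x^0)/\rho_0^2$; the higher modes collect into $W$, which inherits the decay $W={\rm O}((\varepsilon/\rho_0)^2)$ and $\nabla W={\rm O}(\varepsilon^2/\rho_0^3)$ by termwise differentiation, and the orthogonality identities in~\eqref{3.5} hold automatically. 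To recover~\eqref{3.6}, I would apply Green's second identity to the harmonic pair $(w_i,y_j)$ on the annulus $B_R(0)\setminus\omega_1(0)$ and, with weight $\delta$, on $\omega_1(0)$, add the two relations, use the transmission jump~\eqref{3.4}, and send $R\to\infty$. The orthogonality of trigonometric modes on $\partial B_R$ identifies the $k=1$ Fourier coefficient exactly with the combination $\delta_{ij}\,{\rm meas}_2(\omega_1(0))+\int_{\partial\omega_1(0)}w_in_j\,dS_y$ up to the factor $(1-\delta)$, producing~\eqref{3.6}.

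\textbf{Symmetry, positivity, and main obstacle.} Symmetry $(A)_{ij}=(A)_{ji}$ follows by testing the equation for $w_i$ with $w_j$ and vice versa, subtracting, and reading off the result from~\eqref{3.6}. For positive definiteness when $\delta\in[0,1)$ and $\xi\in\mathbb{R}^2\setminus\{0\}$, I would set $w_\xi:=\xi_iw_i$, contract~\eqref{3.3} with $\xi_i$, and test against $v=w_\xi$ to obtain $\int_{\mathbb{R}^2}\chi^\delta|\nabla w_\xi|^2\,dy=(1-\delta)\int_{\partial\omega_1(0)}(\xi\cdot n)w_\xi\,dS_y$. Substituting into~\eqref{3.6} gives
\begin{equation*}
\xi^\top A_{(\omega,\delta)}\xi=(1-\delta)|\xi|^2\,{\rm meas}_2(\omega_1(0))+\int_{\mathbb{R}^2}\chi^\delta|\nabla w_\xi|^2\,dy,
\end{equation*}
which is strictly positive since both terms are nonnegative and the first is positive under the hypotheses $\delta<1$ and ${\rm meas}_2(\omega_1(0))>0$. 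The principal obstacle will be the identification of the dipole entries in the exact form~\eqref{3.6}: matching the $\partial B_R$ contributions in the limit $R\to\infty$ to the first Fourier coefficient requires careful bookkeeping of the normal-flux integrals on $\partial\omega_1(0)^\pm$ and the interior volume term inside $\omega_1(0)$, each weighted by the correct power of $\delta$. The Lax--Milgram step, the multipole expansion, and the symmetry argument are otherwise relatively standard.
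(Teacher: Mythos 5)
Your proposal is correct and follows essentially the same route as the paper: the existence step rests on the weighted Sobolev framework of~\cite{AGG/97}, the dipole coefficient~\eqref{3.6} is extracted by applying Green's second identity to the harmonic pair $(w_i,y_j)$ with the transmission conditions~\eqref{3.4}, and symmetry/positivity follow from testing~\eqref{3.3} against $w_j$. The only cosmetic deviations are that the paper evaluates the Green identity at the finite radius $R=1$ (so no limit $R\to\infty$ is needed, the higher Fourier modes of $W$ being exactly orthogonal to the linear harmonics on $\partial B_1(0)$), and that you package the positivity argument in the single identity $\xi^\top A_{(\omega,\delta)}\xi=(1-\delta)|\xi|^2\,{\rm meas}_2(\omega_1(0))+\int_{\mathbb{R}^2}\chi^\delta_{\omega_1(0)}|\nabla w_\xi|^2\,dy$, which is a slightly tidier rewriting of the paper's two-step estimate but carries the same content.
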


\begin{proof}
The existence of a solution to \eqref{3.3} up to a free constant follows from
the results of \cite{AGG/97}. 
Following \cite[Lemma~3.2]{FV/89}, below we prove the far-field pattern 
\eqref{3.6} in representation \eqref{3.5}. 

For this reason, we split $\mathbb{R}^2$ in the far-field 
$\mathbb{R}^2\setminus B_1(0)$ and the near-field $B_1(0)$. 
Since $w$ from \eqref{3.3} solves the Laplace equation, 
in the far-field it exhibits the outer asymptotic expansion 
\begin{equation}\label{3.7}
\begin{split}
&w(y) ={\textstyle\frac{1}{2\pi}} A_{(\omega,\delta)}
{\textstyle\frac{y}{|y|^2}} +W(y) \quad\text{for $y\in \mathbb{R}^2
\setminus B_1(0)$},\\ 
&\int_{-\pi}^\pi W\,d\theta_0 =\int_{-\pi}^\pi W
{\textstyle\frac{y}{|y|}} \,d\theta_0 =0,\quad
W ={\rm O}\bigl( \bigl({\textstyle\frac{1}{|y|}}\bigr)^2 \bigr), 
\;\nabla W ={\rm O}\bigl( \bigl({\textstyle\frac{1}{|y|}}\bigr)^3 \bigr), 
\end{split}
\end{equation}
which implies \eqref{3.5} after rescaling $y=\frac{x-x^0}{\varepsilon}$. 

In the near-field, we apply the second Green formula for $i,j=1,2$, 
\begin{equation*}
\begin{split}
&0 =\int_{B_1(0)} \chi^\delta_{{}_{\omega_1(0)}} \{\Delta w_i y_j -w_i \Delta
y_j\}\,dy 
=\int_{\partial B_1(0)} \bigl\{ {\textstyle\frac{\partial w_i}{\partial |y|}}
y_j -w_i {\textstyle\frac{\partial y_j}{\partial |y|}} \bigr\} \,dS_y\\ 
&-\int_{\partial\omega_1(0)} \bigl\{ \bigl[ {\textstyle\frac{\partial
w_i}{\partial n}}|_{\partial\omega_1(0)^+} -\delta {\textstyle\frac{\partial
w_i}{\partial n}}|_{\partial\omega_1(0)^-} \bigr]y_j -(1-\delta) w_i
{\textstyle\frac{\partial y_j}{\partial n}} \bigr\} \,dS_y,
\end{split}
\end{equation*}
and substitute here the transmission conditions \eqref{3.4} to derive that 
\begin{equation}\label{3.8}
\begin{split}
&-\int_{\partial B_1(0)} \bigl\{ {\textstyle\frac{\partial w_i}{\partial |y|}}
-w_i\bigr\} {\textstyle\frac{y_j}{|y|}} \,dS_y
=(1-\delta) \int_{\partial\omega_1(0)} \bigl\{ n_i y_j +w_i n_j \bigr\} \,dS_y.
\end{split}
\end{equation}
We apply to \eqref{3.8} the divergence theorem 
\begin{equation*}
\begin{split}
&\int_{\partial\omega_1(0)} n_i y_j \,dS_y =\int_{\omega_1(0)} y_{j,i} \,dy
=\delta_{ij}\, {\rm meas}_2(\omega_1(0))
\end{split}
\end{equation*}
and substitute \eqref{3.7} to calculate the integral over $\partial B_1(0)$ as
\begin{equation*}
\begin{split}
&-\int_{\partial B_1(0)} \bigl\{ {\textstyle\frac{\partial w_i}{\partial |y|}}
-w_i\bigr\} {\textstyle\frac{y_j}{|y|}} \,dS_y ={\textstyle\frac{1}{\pi}} \int_{-\pi}^\pi
(A_{(\omega,\delta)})_{ik} {\textstyle\frac{y_k}{|y|}}
{\textstyle\frac{y_j}{|y|}} \,d\theta_0 = (A_{(\omega,\delta)})_{ij}, 
\end{split}
\end{equation*}
which implies \eqref{3.6}. 

We now prove the symmetry and positive definiteness properties of
$A_{(\omega,\delta)}$. 
Inserting $v=w_j$, $j=1,2$, into \eqref{3.3} we have
\begin{equation*}
\begin{split}
&\int_{\mathbb{R}^2} \chi^\delta_{{}_{\omega_1(0)}}
\nabla w_i^\top \nabla w_j \,dy =(1-\delta) \!\int_{\partial \omega_1(0)}\!\!\!
n_i w_j\,dS_y =(1-\delta) \!\int_{\partial \omega_1(0)}\!\!\! n_j w_i\,dS_y,
\end{split}
\end{equation*}
hence, the symmetry $(A_{(\omega,\delta)})_{ij} =(A_{(\omega,\delta)})_{ji}$ for
$i,j=1,2$ in \eqref{3.6}.
For arbitrary $z\in\mathbb{R}^2$, from \eqref{3.3} we have 
\begin{equation*}
\begin{split}
&0\le\int_{\mathbb{R}^2} \chi^\delta_{{}_{\omega_1(0)}}
|\nabla (z_1w_1 +z_2w_2)|^2 \,dy =(1-\delta) \sum_{i,j=1}^2  \!\int_{\partial
\omega_1(0)}\!\!\! w_iz_in_jz_j\,dS_y.
\end{split}
\end{equation*}
Henceforth, multiplying \eqref{3.6} with $z_iz_j$ and summing the result over
$i,j=1,2$, it follows that
\begin{equation*}
\begin{split}
&\sum_{i,j=1}^2(A_{(\omega,\delta)})_{ij}z_iz_j =(1-\delta) \Bigl\{
|z|^2\, {\rm meas}_2(\omega_1(0)) +\sum_{i,j=1}^2 \int_{\partial \omega_1(0)}\!\!\!
w_iz_in_jz_j\,dS_y \Bigr\}\\
&\ge (1-\delta) |z|^2\, {\rm meas}_2(\omega_1(0))>0,
\end{split}
\end{equation*}
if $1-\delta>0$ and ${\rm meas}_2(\omega_1(0))>0$. 
This completes the proof. 
\end{proof}

It is important to comment on the transmission conditions \eqref{3.4} 
in relation to the stiffness parameter $\delta>0$. 
On the one hand, for $\delta\searrow+0$ implying that $\omega_1(0)$ is a hole,
conditions \eqref{3.4} split as 
\begin{equation}\label{3.9}
\begin{split}
&w^-=w^+\quad\text{on $\partial\omega_1(0)^-$},\quad 
{\textstyle\frac{\partial w}{\partial n}}^+ =-n
\quad\text{on $\partial\omega_1(0)^+$},
\end{split}
\end{equation}
where the indexes $\pm$ mark the traces of the functions in \eqref{3.9}  
 at $\partial\omega_1(0)^\pm$, respectively. 
Henceforth, to find $A_{(\omega,\delta)}$ in \eqref{3.6} instead of
\eqref{3.3}, it suffices to solve the exterior problem under the Neumann
condition \eqref{3.9}: Find $w\in \bigl( H^1_\nu(\mathbb{R}^2
\setminus\omega_1(0)) \setminus\mathbb{P}_0
\bigr)^2$ such that for $i=1,2$
\begin{equation*}
\begin{split}
&\int_{\mathbb{R}^2\setminus\omega_1(0)} 
\nabla w_i^\top \nabla v \,dy =- \!\int_{\partial \omega_1(0)}\!\!\!
n_i v\,dS_y \quad \text{for all $v\in H^1_\nu(\mathbb{R}^2
\setminus\omega_1(0))$}.
\end{split}
\end{equation*}
In this case, $A_{(\omega,\delta)}$ is called the virtual mass, or added mass
matrix according to \cite{PS/51}. 

On the other hand, for $\delta\nearrow+\infty$ implying that 
$\omega_1(0)$ is a rigid inclusion, conditions \eqref{3.4} read 
\begin{equation}\label{3.10}
\begin{split}
&{\textstyle\frac{\partial w}{\partial n}}^- =-n
\quad\text{on $\partial\omega_1(0)^-$},\quad 
w^+ =w^-\quad\text{on $\partial\omega_1(0)^+$}.
\end{split}
\end{equation}
In this case, \eqref{3.3} is split in the interior Neumann problem in
$\omega_1(0)$, and the exterior Dirichlet problem in $\mathbb{R}^2
\setminus\omega_1(0)$. 
The respective $A_{(\omega,\delta)}$ is called the polarization matrix in 
\cite{PS/51}. 

Thus, we have the following. 

\begin{corollary}\label{cor3.1}
The auxiliary problem \eqref{3.3} under the transmission boundary conditions
\eqref{3.4} describes the general case of inclusions of varying stiffness, 
and it accounts for holes (hard obstacles in acoustics) under the Neumann condition
\eqref{3.9} as well as rigid inclusions (soft obstacles in acoustics) under
the Dirichlet condition \eqref{3.10} as the limit cases of the stiffness
parameter $\delta\searrow+0$ and $\delta\nearrow+\infty$, respectively. 
\end{corollary}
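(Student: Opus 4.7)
My plan is to read the corollary as a direct summary of the pointwise analysis of the transmission conditions \eqref{3.4} already carried out in the paragraphs preceding the statement; no essentially new work is required beyond formally passing to the two limits $\delta\searrow+0$ and $\delta\nearrow+\infty$ and identifying the resulting boundary value problems with those governing holes and rigid inclusions.

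First, for the hole regime, I would substitute $\delta=0$ into \eqref{3.4}: the first transmission condition collapses to $\frac{\partial w}{\partial n}|_{\partial\omega_1(0)^+}=-n$, which is exactly \eqref{3.9}, while the coefficient $\chi^\delta_{\omega_1(0)}$ in \eqref{3.3} vanishes identically on $\omega_1(0)$. Hence the interior contribution drops out of the variational equation \eqref{3.3}, and one is left with the exterior Neumann problem on $\mathbb{R}^2\setminus\omega_1(0)$ exhibited just below \eqref{3.9}. The continuity part of \eqref{3.4} becomes irrelevant because the interior trace plays no role in the exterior energy. This identifies $A_{(\omega,\delta)}$ in this limit with the virtual/added mass matrix of Polya--Szeg\H{o}, as claimed.

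For the rigid inclusion regime, I would divide the first equation of \eqref{3.4} through by $\delta$ and let $\delta\to\infty$: the outer flux term drops out and the remaining condition becomes $\frac{\partial w}{\partial n}|_{\partial\omega_1(0)^-}=-n$, which is the interior Neumann condition in \eqref{3.10}. At the same time, testing \eqref{3.3} with $v=w_i$ and using the right-hand side bound shows that $\delta\int_{\omega_1(0)}|\nabla w|^2\,dy$ stays uniformly controlled, forcing $\nabla w\to 0$ on $\omega_1(0)$; thus $w$ tends to a constant inside, and coupled with continuity $w^+=w^-$ across $\partial\omega_1(0)$ this is precisely the Dirichlet condition on $\partial\omega_1(0)^+$ stated in \eqref{3.10}. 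The exterior problem therefore becomes the Dirichlet problem on $\mathbb{R}^2\setminus\omega_1(0)$, and $A_{(\omega,\delta)}$ reduces to the Polya--Szeg\H{o} polarization matrix.

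The main obstacle, if one wanted a fully rigorous justification rather than a formal one, is the $\delta\nearrow+\infty$ case, since there the coefficient in the principal part of \eqref{3.3} blows up and one has to extract a limit of $w$ together with its interior flux in a topology in which both the energy estimate and the boundary traces pass to the limit. The $\delta\searrow+0$ case is strictly easier, as it merely amounts to decoupling a degenerate interior problem from a well-posed exterior one. For the purposes of the corollary, however, both reductions follow immediately from the formal substitution carried out in equations \eqref{3.9} and \eqref{3.10}, so I would present the proof in a single short paragraph citing those substitutions and the Polya--Szeg\H{o} identifications.
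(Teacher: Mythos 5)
Your overall reading of the corollary is right: the paper offers no separate proof, and the statement is exactly a summary of the formal passage to the limits $\delta\searrow+0$ and $\delta\nearrow+\infty$ in the transmission conditions \eqref{3.4}, which is the route you take. Your treatment of the hole case $\delta\searrow+0$ matches the paper.

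However, the energy argument you interpose in the rigid-inclusion case is both unnecessary and incorrect, and it is internally inconsistent with the Neumann condition you derive one sentence earlier. From dividing \eqref{3.4} by $\delta$ you correctly obtain $\frac{\partial w}{\partial n}|_{\partial\omega_1(0)^-} = -n$; but then the interior problem is $\Delta w_i = 0$ in $\omega_1(0)$ with Neumann data $-n_i$, whose solution is $w_i = -y_i + c_i$ for a constant $c_i$ — \emph{not} a constant. Correspondingly, $\int_{\omega_1(0)}|\nabla w_i|^2\,dy \to {\rm meas}_2(\omega_1(0)) > 0$, so $\delta\int_{\omega_1(0)}|\nabla w|^2\,dy$ blows up like $\delta$ and is not ``uniformly controlled''; testing \eqref{3.3} with $v=w_i$ does not give the bound you claim because the right-hand side $(1-\delta)\int_{\partial\omega_1(0)} n_i w_i\,dS_y$ itself grows like $\delta$. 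If $\nabla w$ really tended to zero in $\omega_1(0)$ one would get $\frac{\partial w}{\partial n}|^- \to 0$, contradicting the condition $\frac{\partial w}{\partial n}|^- = -n$ you have just established. The correct reading of \eqref{3.10}, and what the paper actually states, is a sequential decoupling: the interior Neumann problem determines $w^-$ up to an additive constant (namely $w_i^- = -y_i + c_i$ on $\partial\omega_1(0)$), and continuity $w^+ = w^-$ then supplies the (non-constant) Dirichlet data for the exterior problem, whose solution yields the Polya--Szeg\H{o} polarization matrix. You should drop the paragraph about $w$ tending to a constant and simply state this decoupling.
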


With the help of the boundary layer $w$ constructed in Lemma~\ref{lem3.1} we
can represent the first order asymptotic term in the expansion of the perturbed
solution $u^{(\omega,\varepsilon,x^0,\delta)}$ as $\varepsilon\searrow+0$ 
given in the following theorem. 

\begin{theorem}\label{theo3.1}
The solution $u^{(\omega,\varepsilon,x^0,\delta)}\in K(\Omega\setminus\Gamma)$
of the heterogeneous problem \eqref{2.4}, the solution $u^0$ of the homogeneous  
problem \eqref{2.7}, 
and the rescaled solution $w^\varepsilon(x) :=w({\textstyle\frac{x-x^0}{\varepsilon}})$ 
of \eqref{3.3} 
admit the uniform asymptotic representation for $x\in\Omega\setminus\Gamma$: 
\begin{equation}\label{3.11}
\begin{split}
&u^{(\omega,\varepsilon,x^0,\delta)}(x) =u^0(x) 
+\varepsilon \nabla u^0(x^0)^\top w^\varepsilon(x)
\eta_{{}_{\Gamma_D}}(x) +Q(x),
\end{split}
\end{equation}
where  $\eta_{{}_{\Gamma_D}}$ is a smooth cut-off function which is equal to one
except in a neighborhood of the Dirichlet boundary $\Gamma_D$ on which 
$\eta_{{}_{\Gamma_D}}=0$. 
The residual $Q\in H(\Omega\setminus\Gamma)$ and $w^\varepsilon$ exhibit 
the following asymptotic behavior as $\varepsilon\searrow+0$:
\begin{equation}\label{3.12}
\begin{split}
&\|Q\|_{{}_{H^1(\Omega\setminus\Gamma)}} ={\rm O}(\varepsilon^2),
\quad\text{$w^\varepsilon ={\rm O}(\varepsilon)$ far away from $\omega_\varepsilon(x^0)$.}
\end{split}
\end{equation}
\end{theorem}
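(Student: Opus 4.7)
The plan is to invoke Proposition~\ref{prop1.1} with the identifications $A_\varepsilon \leftrightarrow$ the bilinear form of \eqref{2.4} weighted by $\mu\chi^\delta_{\omega_\varepsilon(x^0)}$, $A \leftrightarrow$ its homogeneous counterpart with weight $\mu$, and $\varepsilon F_\varepsilon \leftrightarrow$ multiplication by $\mu(\chi^\delta_{\omega_\varepsilon(x^0)}-1)$, so that the Green-formula rewriting \eqref{2.9} of the homogeneous problem plays exactly the role of the unperturbed inequality \eqref{1.1} inside the $A_\varepsilon$-framework. With the choices $\tilde q^\varepsilon := \nabla u^0(x^0)^\top w^\varepsilon\,\eta_{{}_{\Gamma_D}}$ and $Q := u^{(\omega,\varepsilon,x^0,\delta)} - u^0 - \varepsilon\tilde q^\varepsilon$, the theorem reduces to verifying \eqref{1.6}--\eqref{1.9} with $f(\varepsilon) = \varepsilon^2$.

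Admissibility \eqref{1.6}--\eqref{1.7} is the first step. Because $x^0$ is fixed in $\Omega\setminus\Gamma$ with ${\rm dist}(x^0,\Gamma\cup\Gamma_D)>0$, for every small $\varepsilon$ the crack $\Gamma$ lies outside $\omega_\varepsilon(x^0)$; hence $w^\varepsilon$ is smooth and single-valued across $\Gamma$, so $[\![\tilde q^\varepsilon]\!]=0$ there and the non-penetration constraint $[\![u]\!]\ge 0$ is preserved under $\pm\varepsilon\tilde q^\varepsilon$. The cut-off $\eta_{{}_{\Gamma_D}}$ in turn preserves the homogeneous Dirichlet condition on $\Gamma_D$.

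To verify the corrector equation \eqref{1.8}, I would rescale $y=(x-x^0)/\varepsilon$ and apply Lemma~\ref{lem3.1} component-wise. Since $\eta_{{}_{\Gamma_D}}\equiv 1$ in a neighbourhood of $\omega_\varepsilon(x^0)$, the rescaling turns the main part of $\langle A_\varepsilon\tilde q^\varepsilon,v\rangle$ into $\tfrac{\mu(1-\delta)}{\varepsilon}\sum_{i}\partial_i u^0(x^0)\int_{\partial\omega_\varepsilon(x^0)} n_i v\,dS_x$ via the weak identity \eqref{3.3}; this matches $-\langle F_\varepsilon u^0,v\rangle$ produced by the boundary term in \eqref{2.9} once $\partial_n u^0$ is replaced by $\nabla u^0(x^0)^\top n$. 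The discrepancy $R_\varepsilon$ thus collects three contributions: (a) the Taylor remainder $\partial_n U_{{}^{x^0}}=O(\varepsilon)$ from \eqref{3.2} in the boundary integral over $\partial\omega_\varepsilon(x^0)$; (b) the commutator between $w^\varepsilon$ and $\eta_{{}_{\Gamma_D}}$, supported in a fixed compact set bounded away from $x^0$, where $w^\varepsilon$ and $\nabla w^\varepsilon$ are pointwise $O(\varepsilon)$ by the far-field expansion \eqref{3.5}; (c) the interior jump of $\chi^\delta$ inside $\omega_\varepsilon(x^0)$, which is of order $\varepsilon^2$.

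Contribution (a) is controlled through a rescaled trace inequality over $\partial\omega_\varepsilon(x^0)$ giving $C\varepsilon\|v\|_{H^1}$; contribution (b) is directly bounded by $C\varepsilon\|v\|_{H^1}$ since its integrand lives on a fixed compact support and is pointwise $O(\varepsilon)$; and (c) is $O(\varepsilon)\|v\|_{H^1}$ by direct estimate. Hence $\|R_\varepsilon\|_{H^\star}=O(\varepsilon)$ and $\varepsilon\|R_\varepsilon\|=O(\varepsilon^2)=f(\varepsilon)$, so Proposition~\ref{prop1.1} yields $\|Q\|_{H^1(\Omega\setminus\Gamma)}=O(\varepsilon^2)$, which is \eqref{3.12}. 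The asserted bound $w^\varepsilon=O(\varepsilon)$ far from $\omega_\varepsilon(x^0)$ follows at once from the leading dipole term $\tfrac{\varepsilon}{2\pi}A_{(\omega,\delta)}\tfrac{x-x^0}{\rho_0^2}$ in \eqref{3.5} since $\rho_0$ stays bounded below. The main obstacle will be the joint handling of the Taylor remainder on $\partial\omega_\varepsilon(x^0)$ and the cut-off commutator: the delicate point is that $w^\varepsilon$ does not vanish on $\Gamma_D$, and artificially localizing it there must not spoil the $O(\varepsilon^2)$ decay of $Q$, which is only ensured by combining the extra factor $\varepsilon$ in the ansatz with the $1/\rho_0$-decay of the dipole term in \eqref{3.5}.
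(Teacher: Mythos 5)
Your proposal follows essentially the same route as the paper: the paper also chooses $\tilde q^\varepsilon=\nabla u^0(x^0)^\top w^\varepsilon\eta_{{}_{\Gamma_D}}$, tests the heterogeneous inequality \eqref{2.4} with $u^0+\varepsilon\tilde q^\varepsilon$ and the rewritten homogeneous inequality \eqref{2.9} with $u^{(\omega,\varepsilon,x^0,\delta)}-\varepsilon\tilde q^\varepsilon$ (admissibility secured exactly as you argue, via $[\![w^\varepsilon]\!]=0$ on $\Gamma$ and the Dirichlet cut-off), sums, subtracts the $\varepsilon\nabla u^0(x^0)$-weighted corrector equation \eqref{3.13}, and closes with \eqref{3.2} and \eqref{3.5}. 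Though the paper performs this computation directly rather than quoting Proposition~\ref{prop1.1}, the structure is the instance of it you describe, so the two arguments coincide.

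One small inaccuracy in your accounting of $R_\varepsilon$: your item (c), an ``interior jump of $\chi^\delta$ inside $\omega_\varepsilon(x^0)$,'' does not correspond to any term that actually arises. What does appear --- and what you do not name explicitly --- are the two boundary integrals over $\Gamma_N$ and over $\Gamma$ that are generated when the whole-plane identity \eqref{3.3} is transferred by Green's formula to the bounded cracked domain, giving \eqref{3.13}. These integrals carry $\varepsilon\,\partial_n(\nabla u^0(x^0)^\top w^\varepsilon)$ on $\Gamma_N$ and $\Gamma$, which is $O(\varepsilon^2)$ because $\nabla w^\varepsilon=O(\varepsilon)$ at unit distance from $\omega_\varepsilon(x^0)$ by \eqref{3.5}; so the final estimate $\|\nabla Q\|_{L^2}=O(\varepsilon^2)$ is unaffected, but the list of contributions should be: the Taylor remainder $\partial_n U_{{}^{x^0}}$ on $\partial\omega_\varepsilon(x^0)$, the $\Gamma_N$- and $\Gamma$-boundary terms from \eqref{3.13}, and the cut-off commutator supported in ${\rm supp}(1-\eta_{{}_{\Gamma_D}})$.
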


\begin{proof}
Since $[\![w^\varepsilon]\!]=0$ on $\Gamma_\infty$, we can substitute 
$v =u^0 +\varepsilon
\nabla u^0(x^0)^\top w^\varepsilon \eta_{{}_{\Gamma_D}}\in K(\Omega\setminus\Gamma)$ 
in \eqref{2.4}, and $v =u^{(\omega,\varepsilon,x^0,\delta)} -\varepsilon
\nabla u^0(x^0)^\top w^\varepsilon \eta_{{}_{\Gamma_D}}\in K(\Omega\setminus\Gamma)$ 
in \eqref{2.9} as the test functions, which yields two inequalities. 
Summing them together and dividing by $\mu$ we get
\begin{equation}\label{3.14}
\begin{split}
&\int_{\Omega\setminus\Gamma}\! 
\chi^\delta_{{}_{\omega_\varepsilon(x^0)}}\!
\nabla (u^{(\omega,\varepsilon,x^0,\delta)} -u^0)^\top \nabla Q \,dx
\le (1-\delta) \int_{\partial \omega_\varepsilon(x^0)}\! 
{\textstyle\frac{\partial u^0}{\partial n}} Q\,dS_x,
\end{split}
\end{equation}
where $Q :=u^{(\omega,\varepsilon,x^0,\delta)} -u^0(x) -\varepsilon \nabla
u^0(x^0)^\top w^\varepsilon \eta_{{}_{\Gamma_D}}\in H(\Omega\setminus\Gamma)$ 
is defined according to \eqref{3.11}. 

After rescaling $y={\textstyle\frac{x-x^0}{\varepsilon}}$, with the help of the
Green formula in $\Omega\setminus\Gamma$, from \eqref{3.3} we obtain the
following variational equation written in the bounded domain for
$w^\varepsilon_i(x) :=w_i({\textstyle\frac{x-x^0}{\varepsilon}})$, $i=1,2$: 
\begin{equation}\label{3.13}
\begin{split}
&\int_{\Omega\setminus\Gamma} \chi^\delta_{{}_{\omega_\varepsilon(x^0)}}
(\nabla w^\varepsilon_i)^\top \nabla v \,dx ={\textstyle\frac{1-\delta}{\varepsilon}}
\int_{\partial \omega_\varepsilon(x^0)}\! n_i v\,dS_x\\
&+\int_{\Gamma_N} {\textstyle\frac{\partial w^\varepsilon_i}{\partial n}} v\,dS_x
-\int_{\Gamma} {\textstyle\frac{\partial w^\varepsilon_i}{\partial n}} [\![v]\!]\,dS_x
\quad\text{for all $v\in H(\Omega\setminus\Gamma)$}. 
\end{split}
\end{equation}
Now inserting $v=Q$ into \eqref{3.13} after multiplication by the 
vector $\varepsilon\nabla u^0(x^0)$ and subtracting it from \eqref{3.14} 
results in the following residual estimate 
\begin{equation*}
\begin{split}
&\int_{\Omega\setminus\Gamma} 
\chi^\delta_{{}_{\omega_\varepsilon(x^0)}} |\nabla Q|^2 \,dx
\le (1-\delta) \int_{\partial \omega_\varepsilon(x^0)}\! 
\bigl( {\textstyle\frac{\partial u^0}{\partial n}} -\nabla
u^0(x^0)^\top n\bigr)Q\,dS_x\\
&-\varepsilon \int_{\Gamma_N} {\textstyle\frac{\partial}{\partial n}}
\bigl( \nabla u^0(x^0)^\top \!w^\varepsilon \bigr) Q\,dS_x
+\varepsilon \int_{\Gamma} {\textstyle\frac{\partial}{\partial n}}
\bigl( \nabla u^0(x^0)^\top \!w^\varepsilon \bigr)
[\![Q]\!]\,dS_x\\
&+\varepsilon \int_{{\rm supp}(1-\eta_{{}_{\Gamma_D}})} 
\chi^\delta_{{}_{\omega_\varepsilon(x^0)}} 
\nabla \bigl( \nabla u^0(x^0)^\top \!w^\varepsilon 
(1-\eta_{{}_{\Gamma_D}}) \bigr)^\top \nabla Q \,dx.
\end{split}
\end{equation*}
We apply here the expansion  \eqref{3.2} at $\partial \omega_\varepsilon(x^0)$ 
which implies that 
$\|\nabla Q\|_{{}_{L^2(\Omega\setminus\Gamma)}} ={\rm O}(\varepsilon^2)$, 
hence the first estimate in \eqref{3.12}. 
The pointwise estimate $w^\varepsilon ={\rm O}(\varepsilon)$ holds far away 
from $\omega_\varepsilon(x^0)$ due to \eqref{3.5}. 
The proof is complete.
\end{proof}

In the following sections we apply Theorem~\ref{theo3.1} for the topology
sensitivity of the objective functions which depend on both the crack $\Gamma$
and the defect $\omega_\varepsilon(x^0)$. 

\subsection{Topology sensitivity of SIF-function}\label{sec3.2}

We start with the notation of {\em stress intensity factor} (SIF). 
At the crack tip $0$, where the stress is concentrated, from \eqref{2.5a} and
\eqref{2.5c} we infer the inner asymptotic expansion (compare to \eqref{3.1}) 
for $x\in B_R(0)\setminus\Gamma$ with $R =\min\{r,{\rm dist}(0,\partial\Omega)\}$:
\begin{equation}\label{3.15}
\begin{split}
&u^{(\omega,\varepsilon,x^0,\delta)}(x)
=u^{(\omega,\varepsilon,x^0,\delta)}(0) +{\textstyle\frac{1}{\mu}}
\sqrt{\textstyle\frac{2}{\pi}} {c}^{(\omega,\varepsilon,x^0,\delta)}_\Gamma
\sqrt{\rho} \sin{\textstyle\frac{\theta}{2}} +U(x),\\
&\int_{-\pi}^\pi U\,d\theta =\int_{-\pi}^\pi U
\bigl( \cos{\textstyle\frac{\theta}{2}}, \sin{\textstyle\frac{\theta}{2}}
\bigr)^\top \,d\theta =0,\quad U ={\rm O}(\rho), 
\; \nabla U ={\rm O}(1).
\end{split}
\end{equation}
In the fracture literature, the factor
${c}^{(\omega,\varepsilon,x^0,\delta)}_\Gamma$ in \eqref{3.15} is called SIF, here
due to the mode-III crack in the anti-plane setting of the spatial fracture
problem. 
The SIF characterizes the main singularity at the crack tip. 
Moreover, the inequality conditions \eqref{2.5c} require necessarily 
\begin{equation}\label{3.16}
\begin{split}
&{c}^{(\omega,\varepsilon,x^0,\delta)}_\Gamma\ge0. 
\end{split}
\end{equation}

For the justification of \eqref{3.15} and \eqref{3.16} we refer to \cite{KKT/10,KK/08}, 
where the homogeneous nonlinear model with rectilinear crack \eqref{2.8} was considered. 
Indeed, this asymptotic result is stated by the method of separation of variables 
locally in the neighborhood $B_R(0)\setminus\Gamma$ away from the inhomogeneity 
$\omega_\varepsilon(x^0)$. 
Here the governing equations \eqref{2.5a} and \eqref{2.5c} for $u^0$ coincide 
with the equations \eqref{2.8a} and \eqref{2.8c} for the solution 
$u^{(\omega,\varepsilon,x^0,\delta)}$ of the inhomogeneous problem. 
Therefore, the inner asymptotic expansions \eqref{3.15} of 
$u^{(\omega,\varepsilon,x^0,\delta)}$ and \eqref{3.25} of $u^0$ 
are similar and differ only by constant parameters (the SIF). 
For a respective mechanical confirmation see \cite{Leb/00}.  

Below we sketch a Saint--Venant estimate proving the bound of $\nabla U$ in \eqref{3.15}. 
Since $\Delta U=0$, then $U$ is a harmonic function which is infinitely differentiable 
in $B_R(0)\setminus\Gamma$, and integrating by parts we derive for $t\in(0,R)$:
\begin{equation*}
\begin{split}
&I(t) :=\int_{B_t(0)\setminus\Gamma} |\nabla U|^2 \,dx 
=\int_{\partial B_t(0)} {\textstyle\frac{\partial U}{\partial\rho}} U \,dS_x 
-\int_{B_t(0)\cap\Gamma} {\textstyle\frac{\partial U}{\partial n}} [\![U]\!] \,dS_x\\
&\le \int_{-\pi}^\pi {\textstyle\frac{\partial U}{\partial\rho}} U \,t d\theta
\le \int_{-\pi}^\pi \bigl( {\textstyle\frac{t}{2}} ({\textstyle\frac{\partial 
U}{\partial\rho}})^2 +{\textstyle\frac{1}{2t}} U^2 \bigr) \,t d\theta
\le \int_{-\pi}^\pi \bigl( {\textstyle\frac{t}{2}} ({\textstyle\frac{\partial 
U}{\partial\rho}})^2 +{\textstyle\frac{1}{2t}} ({\textstyle\frac{\partial 
U}{\partial\theta}})^2 \bigr) \,t d\theta\\
&={\textstyle\frac{t}{2}} \int_{\partial B_t(0)} |\nabla U|^2 \,dS_x 
={\textstyle\frac{t}{2}} {\textstyle\frac{d}{dt}} I(t).
\end{split}
\end{equation*}
Here we have used, consequently: conditions \eqref{2.5c} justifying that 
at $B_t(0)\cap\Gamma$
\begin{equation*}
\begin{split}
&0 ={\textstyle\frac{\partial u^{(\omega,\varepsilon,x^0,\delta)}}{\partial n}}
[\![u^{(\omega,\varepsilon,x^0,\delta)}]\!] ={\textstyle\frac{\partial U}{\partial n}} 
\bigl( {\textstyle\frac{2}{\mu}}
\sqrt{\textstyle\frac{2}{\pi}} {c}^{(\omega,\varepsilon,x^0,\delta)}_\Gamma
\sqrt{\rho} + [\![U]\!] \bigr) \le {\textstyle\frac{\partial U}{\partial n}}  [\![U]\!] 
\end{split}
\end{equation*}
due to \eqref{3.15} and \eqref{3.16}, Young's and Wirtinger's inequalities, 
and the co-area formula. 
Integrating this differential inequality results in the estimate 
$I(t)\le ({\textstyle\frac{t}{R}})^2 I(R)$, which implies $I(t) ={\rm O}(t^2)$ 
and follows $\nabla U ={\rm O}(1)$ in \eqref{3.15}. 

From a mathematical viewpoint, the factor in \eqref{3.15} can be determined
in the dual space of $H(\Omega\setminus\Gamma)$ through the so-called weight function, which we introduce next. 
While the existence of a weight function is well known for the linear crack problem,  
e.g. in \cite[Chapter~6]{MNP/00}, here we modify it for the underlying nonlinear problem. 
In fact, the modified weight function $\zeta$ provides formula \eqref{3.24} 
representing the SIF. 

Let $\eta(\rho)$ be a smooth cut-off function supported in the disk 
$B_{2R}(0)\subset\Omega$, $\eta\equiv1$ in $B_R(0)$, and $R>r$, where $r>0$
stands always for the distance to the defect. 
With the help of the cut-off function we extend in $\Omega$ the tangential vector 
$\tau$ from the crack $\Gamma$ by the vector 
\begin{equation}\label{3.17}
\begin{split}
&V(x) :=\tau\,\eta(\rho),\quad \tau=(1,0)^\top
\end{split}
\end{equation}
which is  used further for the shape sensitivity in \eqref{3.36} 
following the velocity method commonly adopted in shape optimization \cite{SZ/92}.
Using the notation of matrices for
\begin{equation}\label{3.18}
\begin{split}
&D(V) :={\rm div}(V) \,{\rm Id} -{\textstyle\frac{\partial V}{\partial x}}
-{\textstyle\frac{\partial V}{\partial x}}^\top 
\in {\rm Sym} (\mathbb{R}^{2\times2}),
\end{split}
\end{equation}
where ${\rm Id}$ means the identity matrix, the coincidence set 
\begin{equation*}
\begin{split}
&\Xi :=\{x\in\Gamma:\; [\![u^0]\!]=0\}
\end{split}
\end{equation*}
and the 'square-root' function 
$S(x):= \sqrt{\textstyle\frac{2}{\pi}} \sqrt{\rho} \sin{\textstyle\frac{\theta}{2}}$, 
we formulate the auxiliary variational problem: 
Find $\xi\in H(\Omega\setminus\Gamma)$ such that 
\begin{equation}\label{3.19}
\begin{split}
&[\![\xi]\!] =[\![V^\top \nabla S]\!]\;\text{on $\Xi$},
\quad\int_{\Omega\setminus\Gamma} \nabla \xi^\top \nabla v \,dx
=-\int_{\Omega\setminus\Gamma} \nabla S^\top D(V) \nabla v \,dx\\
&\text{for all $v\in H(\Omega\setminus\Gamma)$ with $[\![v]\!] =0$ on $\Xi$},
\end{split}
\end{equation}
where $V^\top\nabla S =-{\textstyle\frac{1}{\sqrt{2\pi}}}
{\textstyle\frac{1}{\sqrt{\rho}}} \sin{\textstyle\frac{\theta}{2}} \eta$ 
is the directional derivative of $S$ with respect to $V$, 
and $[\![V^\top \nabla S]\!] =-\sqrt{\textstyle\frac{2}{\pi \rho}} \eta$. 

\begin{remark}\label{rem3.1}
Due to the inhomogeneous condition stated at $\Xi$ in \eqref{3.19}, 
to provide $[\![\xi]\!]\in H^{1/2}_{00}(\Gamma)$ we assume that 
the coincidence set $\Xi$ where $[\![u^0]\!]=0$ is separated from the crack tip, 
i.e. $0\not\in\overline{\Xi}$. 
For example, this assumption is guaranteed for the stress intensity factor 
${c}^0_\Gamma>0$ (see the definition of ${c}^0_\Gamma$ in \eqref{3.25}) 
when the crack is open in the vicinity. 
Otherwise, if the crack is closed $[\![u^0]\!]\equiv 0$ in a neighborhood 
$[-C,0]\times\{0\} \subset\Gamma$ of the crack tip $(0,0)$, 
then the crack problem can be restated for the crack tip $(-C,0)$. 
\end{remark}

In order to get the strong formulation we use the following identities 
in the right-hand side of \eqref{3.19}: 
\begin{equation*}
\begin{split}
&{\rm div} \bigl( \nabla S^\top D(V) \bigr)
={\rm div}(V) \Delta S -\Delta V^\top \nabla S\\ 
&-2\bigl( \nabla V_1^\top \nabla S_{,1} +\nabla V_2^\top \nabla S_{,2} \bigr) 
=-\Delta (V^\top \nabla S)
\end{split}
\end{equation*}
where we have applied $\Delta S=0$ in $\Omega\setminus\Gamma$, and 
\begin{equation*}
\begin{split}
&\bigl( \nabla S^\top D(V) \bigr) n
=0
=-{\textstyle\frac{\partial}{\partial n}} (V^\top \nabla S) 
\quad\text{on $\Gamma^\pm$}
\end{split}
\end{equation*}
due to $V_2=0$, ${\textstyle\frac{\partial V}{\partial n}}=0$, 
and ${\textstyle\frac{\partial S}{\partial n}}=0$, 
recalling that ${\textstyle\frac{\partial}{\partial n}} 
=-{\textstyle\frac{1}{\rho}} {\textstyle\frac{\partial}{\partial\theta}}$ 
at $\Gamma^\pm$, as $\theta =\pm\pi$. 
Henceforth, after integration of \eqref{3.19} by parts, 
the unique solution of \eqref{3.19} satisfies the mixed Dirichlet--Neumann problem: 
\begin{subequations}\label{3.20}
\begin{equation}\label{3.20a}
-\Delta \xi =-\Delta \bigl( V^\top \nabla S \bigr)
\quad\text{in $\Omega\setminus\Gamma$},
\end{equation}
\begin{equation}\label{3.20b}
\xi =0\quad\text{on $\Gamma_D$},\quad 
{\textstyle\frac{\partial \xi}{\partial n}} =0 \quad\text{on $\Gamma_N$},
\end{equation}
\begin{equation}\label{3.20c}
\begin{split}
&[\![\xi]\!] =[\![V^\top \nabla S]\!],\quad \bigl[\!\!\bigr[ 
{\textstyle\frac{\partial \xi}{\partial n}} \bigl]\!\!\bigr] 
=\bigl[\!\!\bigr[ {\textstyle\frac{\partial}{\partial n}}
\bigl( V^\top \nabla S \bigr) \bigl]\!\!\bigr] =0\quad\text{on $\Xi$},\\
&{\textstyle\frac{\partial \xi}{\partial n}}
={\textstyle\frac{\partial}{\partial n}}
\bigl( V^\top \nabla S \bigr) =0\quad\text{on $(\Gamma\setminus\Xi)^\pm$}.
\end{split}
\end{equation}
\end{subequations}

From \eqref{3.19} and \eqref{3.20} we define the {\em weight function} 
(here $t>0$ small) 
\begin{equation}\label{3.21}
\begin{split}
&\zeta :=\xi -V^\top \nabla S \in L^2(\Omega\setminus\Gamma)\cap
H^1\bigl( (\Omega\setminus\Gamma)\setminus B_t(0) \bigr),
\end{split}
\end{equation}
which is a non-trivial singular solution of the homogeneous problem
\begin{subequations}\label{3.22}
\begin{equation}\label{3.22a}
-\Delta \zeta =0 \quad\text{in $\Omega\setminus\Gamma$},
\end{equation}
\begin{equation}\label{3.22b}
\zeta =0\quad\text{on $\Gamma_D$},\quad 
{\textstyle\frac{\partial \zeta}{\partial n}} =0 \quad\text{on $\Gamma_N$},
\end{equation}
\begin{equation}\label{3.22c}
\begin{split}
&[\![\zeta]\!] =\bigl[\!\!\bigr[ {\textstyle\frac{\partial \zeta}{\partial n}} 
\bigl]\!\!\bigr] =0\;\text{on $\Xi$},\quad 
{\textstyle\frac{\partial \zeta}{\partial n}} =0
\;\text{on $(\Gamma\setminus\Xi)^\pm$}.
\end{split}
\end{equation}
\end{subequations}
For comparison, for the linear crack problem the coincidence set $\Xi=\emptyset$ 
and the mixed Dirichlet--Neumann problem \eqref{3.22} turns into the 
homogeneous Neumann problem for the weight function $\zeta$. 
From \eqref{3.21} it follows that 
\begin{equation}\label{3.23}
\begin{split}
&\zeta(x) ={\textstyle\frac{1}{\sqrt{2\pi}}}
{\textstyle\frac{1}{\sqrt{\rho}}} \sin{\textstyle\frac{\theta}{2}} 
+\xi(x)\quad\text{for $x\in B_R(0)\setminus\{0\}$},
\end{split}
\end{equation}
which is useful in the following. 

\begin{lemma}\label{lem3.2}
For $0\not\in\overline{\Xi}$ providing solvability of the problem \eqref{3.19}, 
the stress intensity factor ${c}^{(\omega,\varepsilon,x^0,\delta)}_\Gamma$ from
\eqref{3.15} and \eqref{3.16} can be determined by the following integral formula 
\begin{equation}\label{3.24}
\begin{split}
&{c}^{(\omega,\varepsilon,x^0,\delta)}_\Gamma =\max\bigl\{0, 
\int_{\Gamma_N} g\zeta\,dS_x -\mu \int_{\partial\omega_\varepsilon(x^0)}
\bigl[\!\!\bigl[ {\textstyle\frac{\partial
u^{(\omega,\varepsilon,x^0,\delta)}}{\partial n}} \bigr]\!\!\bigr] \zeta\,dS_x\\
&+\mu \int_{\Xi} {\textstyle\frac{\partial \zeta}{\partial n}} 
[\![u^{(\omega,\varepsilon,x^0,\delta)}]\!] \,dS_x 
-\mu \int_{\Gamma\setminus\Xi}
{\textstyle\frac{\partial u^{(\omega,\varepsilon,x^0,\delta)}}{\partial n}} 
[\![\zeta]\!] \,dS_x \bigr\}
\end{split}
\end{equation}
with the weight function $\zeta$ defined in \eqref{3.19} and \eqref{3.21} 
together with the properties \eqref{3.22} and \eqref{3.23}. 
\end{lemma}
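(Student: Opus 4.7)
The plan is to derive (3.24) by applying Green's second identity to $u^{(\omega,\varepsilon,x^0,\delta)}$ and $\zeta$ in the cracked domain with the defect and a shrinking disc $B_t(0)$ around the crack tip removed, and then sending $t\searrow0$ so as to extract the SIF from the disc integral. Since $u^{(\omega,\varepsilon,x^0,\delta)}$ is harmonic on each side of $\partial\omega_\varepsilon(x^0)$ by (2.5a) and $\zeta$ is harmonic on $\Omega\setminus\Gamma$ by (3.22a), the bulk terms vanish, leaving only boundary integrals, which I would compute piece by piece.

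I would split these boundary contributions into four groups and simplify each in turn. On $\Gamma_D$ both functions vanish by (2.5b) and (3.22b) and nothing contributes; on $\Gamma_N$ the Neumann conditions $\mu\,\partial u^{(\omega,\varepsilon,x^0,\delta)}/\partial n=g$ and $\partial\zeta/\partial n=0$ yield the first term of (3.24). On $\partial\omega_\varepsilon(x^0)$ I would combine the inner and outer contributions, exploiting $[\![u^{(\omega,\varepsilon,x^0,\delta)}]\!]=0$ and the flux jump from the transmission conditions (2.5d), to produce the $[\![\partial u^{(\omega,\varepsilon,x^0,\delta)}/\partial n]\!]\zeta$ integral. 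On the crack faces I would use $[\![\partial u^{(\omega,\varepsilon,x^0,\delta)}/\partial n]\!]=0$ from (2.5c), split $\Gamma=\Xi\cup(\Gamma\setminus\Xi)$, and apply the mixed conditions (3.22c) for $\zeta$: this exactly cancels one of the two jump products on each piece and leaves behind the two $\Xi$ and $\Gamma\setminus\Xi$ integrals appearing in (3.24).

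The final ingredient is the limit of $\int_{\partial B_t(0)}(\zeta\,\partial u^{(\omega,\varepsilon,x^0,\delta)}/\partial n-u^{(\omega,\varepsilon,x^0,\delta)}\,\partial\zeta/\partial n)\,dS_x$ as $t\searrow0$. Substituting the inner expansion (3.15) for $u^{(\omega,\varepsilon,x^0,\delta)}$ and the singular representation (3.23) for $\zeta$, and using orthogonality of the angular modes on $[-\pi,\pi]$, I expect only the cross-term between the $\rho^{-1/2}\sin(\theta/2)$ mode of $\zeta$ and the $\rho^{1/2}\sin(\theta/2)$ mode of $u^{(\omega,\varepsilon,x^0,\delta)}$ to survive, contributing precisely $c^{(\omega,\varepsilon,x^0,\delta)}_\Gamma/\mu$; the remainders $U$, $\xi$ and the constant $u^{(\omega,\varepsilon,x^0,\delta)}(0)$ will decay as $t\searrow0$ thanks to the pointwise bounds in (3.15) and the $H^1$-regularity of $\xi$ away from the tip recorded in (3.21). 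Multiplying by $\mu$ and collecting the identities gives the argument of the max in (3.24); the outer $\max\{0,\cdot\}$ enforces the sign constraint (3.16), so the equality is valid as stated.

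The main obstacles I anticipate are (i) the bookkeeping of outward-normal orientations on $\Gamma^\pm$, $\partial\omega_\varepsilon(x^0)$, and $\partial B_t(0)$, which are each fixed by different conventions in the strong formulation, and (ii) justifying the remainder bounds in the limit $t\searrow0$: away from the coincidence set $\Xi$ the problem behaves linearly and is standard, but the inhomogeneous Dirichlet datum for $\zeta$ on $\Xi$ in (3.19) forces one to use the separation assumption $0\notin\overline{\Xi}$ of Remark 3.1 so that the singular part of $\zeta$ in (3.23) is unaffected and the angular orthogonality computation is clean.
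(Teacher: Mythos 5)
Your proposal is correct and follows essentially the same line as the paper's proof: Green's second identity for $u^{(\omega,\varepsilon,x^0,\delta)}$ and $\zeta$ in $(\Omega\setminus\Gamma)\setminus B_t(0)$, a boundary-by-boundary reduction using the conditions \eqref{2.5} and \eqref{3.22}, and extraction of $c^{(\omega,\varepsilon,x^0,\delta)}_\Gamma/\mu$ from the shrinking-disc integral by angular orthogonality of the $\sin(\theta/2)$ modes in \eqref{3.15} and \eqref{3.23}. The obstacles you flag — normal-orientation bookkeeping, uniform boundedness of the crack-face integrals as $t\searrow+0$, and the role of $0\notin\overline{\Xi}$ — are exactly the points the paper handles explicitly, so nothing is missing.
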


\begin{proof}
Using the second Green formula in $(\Omega\setminus\Gamma)\setminus B_t(0)$
with small $t\in(0,t_0)$, from \eqref{2.5} and \eqref{3.20} we derive that 
\begin{equation*}
\begin{split}
&0 =\int_{(\Omega\setminus\Gamma)\setminus B_t(0)}
\{\Delta \zeta u^{(\omega,\varepsilon,x^0,\delta)} -\zeta \Delta
u^{(\omega,\varepsilon,x^0,\delta)}\} \,dx =-{\textstyle\frac{1}{\mu}}
\int_{\Gamma_N} g\zeta\,dS_x\\ 
&+\int_{\partial\omega_\varepsilon(x^0)}
\bigl[\!\!\bigl[ {\textstyle\frac{\partial
u^{(\omega,\varepsilon,x^0,\delta)}}{\partial n}} \bigr]\!\!\bigr] \zeta\,dS_x
-\int_{\partial B_t(0)} \bigl\{ 
{\textstyle\frac{\partial \zeta}{\partial \rho}}
u^{(\omega,\varepsilon,x^0,\delta)} -\zeta
{\textstyle\frac{\partial u^{(\omega,\varepsilon,x^0,\delta)}}{\partial \rho}}
\bigr\} \,dS_x\\
&-\int_{\Gamma\setminus B_t(0)} \bigl\{ 
{\textstyle\frac{\partial \zeta}{\partial n}}
[\![u^{(\omega,\varepsilon,x^0,\delta)}]\!] -[\![\zeta]\!]
{\textstyle\frac{\partial u^{(\omega,\varepsilon,x^0,\delta)}}{\partial n}}
\bigr\} \,dS_x.
\end{split}
\end{equation*}
In the latter integral over $\Gamma\setminus B_t(0)$, 
the first summand vanishes at $(\Gamma\setminus\overline{\Xi})\setminus B_t(0)$, 
and the second summand is zero at $\Xi\setminus B_t(0)$ due to \eqref{3.22c}. 

For fixed $\varepsilon$ and $t\searrow+0$, 
since the coincidence set is detached from the crack tip: 
there exists $C>0$ such that $B_C(0)\cap\Xi =\emptyset$, 
then the integral over $\Xi\setminus B_t(0)$ is uniformly bounded: 
\begin{equation*}
\begin{split}
&\int_{\Xi\setminus B_t(0)} {\textstyle\frac{\partial \zeta}{\partial n}} 
[\![u^{(\omega,\varepsilon,x^0,\delta)}]\!] \,dS_x 
=\int_{\Xi\setminus B_C(0)} {\textstyle\frac{\partial \zeta}{\partial n}} 
[\![u^{(\omega,\varepsilon,x^0,\delta)}]\!] \,dS_x ={\rm O}(1) 
\;\text{as $t\searrow+0$}.
\end{split}
\end{equation*}
This integral is well defined because the solution $\zeta$ 
of the mixed Dirichlet--Neumann problem \eqref{3.19} exhibits 
the square-root singularity (see e.g. \cite{MNP/00} and references therein), 
hence ${\textstyle\frac{\partial \zeta}{\partial n}}$ 
has the one-over-square-root singularity which is integrable, and 
$[\![u^{(\omega,\varepsilon,x^0,\delta)}]\!]$ is $H^{3/2}$-smooth in 
$\Xi\setminus B_C(0)$. 
The $H^2$-regularity of the solution to the nonlinear crack problem 
up to the crack faces except the crack vicinity is proved rigorously 
e.g. in \cite{BKK/00} with the shift technique. 
Similarly, the integral over $(\Gamma\setminus\overline{\Xi})\setminus B_t(0)$ 
is uniformly bounded: 
\begin{equation*}
\begin{split}
&\int_{(\Gamma\setminus\Xi)\setminus B_t(0)}
{\textstyle\frac{\partial u^{(\omega,\varepsilon,x^0,\delta)}}{\partial n}} 
[\![\zeta]\!] \,dS_x
=\int_{(\Gamma\setminus\Xi)\setminus B_{t_0}(0)}
{\textstyle\frac{\partial u^{(\omega,\varepsilon,x^0,\delta)}}{\partial n}} 
[\![\zeta]\!] \,dS_x\\ 
&+\int_t^{t_0}
{\textstyle\frac{1}{\rho}} {\textstyle\frac{\partial U}{\partial\theta}}
\bigl( \sqrt{\textstyle\frac{2}{\pi \rho}} +[\![\xi]\!] \bigr) \,d\rho 
={\rm O}(1)\;\text{as $t\searrow+0$}
\end{split}
\end{equation*}
due to the representations \eqref{3.15} and \eqref{3.23}, 
and ${\textstyle\frac{1}{\rho}} {\textstyle\frac{\partial U}{\partial\theta}} 
={\rm O}(1)$ according to \eqref{3.15}.

The former integral over $\partial B_t(0)$ can be calculated 
by plugging the representations \eqref{3.15} and \eqref{3.23} here 
\begin{equation*}
\begin{split}
&-\int_{\partial B_t(0)} \bigl\{ 
{\textstyle\frac{\partial \zeta}{\partial \rho}}
u^{(\omega,\varepsilon,x^0,\delta)} -\zeta
{\textstyle\frac{\partial u^{(\omega,\varepsilon,x^0,\delta)}}{\partial \rho}}
\bigr\} \,dS_x
= {\textstyle\frac{{c}^{(\omega,\varepsilon,x^0,\delta)}_\Gamma}{\mu\pi}} 
\int_{-\pi}^\pi \sin^2({\textstyle\frac{\theta}{2}}) \,d\theta\\ 
&-t \int_{-\pi}^\pi \bigl\{ {\textstyle\frac{\partial \xi}{\partial \rho}} 
\bigl( u^{(\omega,\varepsilon,x^0,\delta)}(0) 
+{\textstyle\frac{{c}^{(\omega,\varepsilon,x^0,\delta)}_\Gamma}{\mu}}
\sqrt{\textstyle\frac{2t}{\pi}} \sin{\textstyle\frac{\theta}{2}} +U \bigr)\\ 
&-\xi\bigl( 
{\textstyle\frac{{c}^{(\omega,\varepsilon,x^0,\delta)}_\Gamma}{\mu\sqrt{2\pi t}}} 
\sin{\textstyle\frac{\theta}{2}} 
+{\textstyle\frac{\partial U}{\partial \rho}}\bigr) \bigr\} \,d\theta
={\textstyle\frac{1}{\mu}}
{c}^{(\omega,\varepsilon,x^0,\delta)}_\Gamma +{\rm O}(\sqrt{t})
\end{split}
\end{equation*}
which holds true due to $\xi ={\rm O}(1)$, 
$\frac{\partial \xi}{\partial \rho} ={\rm O}(\frac{1}{\sqrt{t}})$ 
(similarly to \eqref{3.15}), using $dS_x =t d\theta$ and \eqref{3.15} 
for $U(\rho,\theta)$ as $\rho=t$ and $\theta\in(-\pi,\pi)$. 
Therefore, passing $t\searrow+0$ and accounting for \eqref{3.16} we have proven
formula \eqref{3.24}.
\end{proof}

Next, using Theorem~\ref{theo3.1} we expand the right hand side of \eqref{3.24} 
in $\varepsilon\searrow+0$ and derive the main result of this section.

\begin{theorem}\label{theo3.2}
For $0\not\in\overline{\Xi}$, 
the SIF ${c}^{(\omega,\varepsilon,x^0,\delta)}_\Gamma$ of the heterogeneous
problem \eqref{2.4} given in \eqref{3.24} admits the following asymptotic
representation 
\begin{equation}\label{3.27}
\begin{split}
&{c}^{(\omega,\varepsilon,x^0,\delta)}_\Gamma =\max\bigl\{0, 
\int_{\Gamma_N} g\zeta\,dS_x -\varepsilon^2 \mu
\nabla u^0(x^0)^\top A_{(\omega,\delta)} \nabla \zeta(x^0)\\ 
&+\mu \int_{\Xi} {\textstyle\frac{\partial \zeta}{\partial n}} 
[\![u^{(\omega,\varepsilon,x^0,\delta)}]\!] \,dS_x 
-\mu \int_{\Gamma\setminus\Xi}
{\textstyle\frac{\partial u^{(\omega,\varepsilon,x^0,\delta)}}{\partial n}} 
[\![\zeta]\!] \,dS_x +{\rm Res} \bigr\},\\
&{\rm Res} ={\rm O}(\varepsilon^3),
\end{split}
\end{equation}
where $A_{(\omega,\delta)}$ is the dipole matrix and 
$\nabla \zeta(x^0) = {\textstyle\frac{1}{2\sqrt{2\pi}}} r^{-3/2} \bigl(
-\sin{\textstyle\frac{3\phi}{2}}, \cos{\textstyle\frac{3\phi}{2}} \bigr)^\top +{\rm O}(r^{-1/2})$ 
at the defect center $x^0=r(\cos\phi,\sin\phi)^\top$. 
\end{theorem}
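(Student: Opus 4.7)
The plan is to combine the integral representation for the SIF given in Lemma~\ref{lem3.2}, formula \eqref{3.24}, with the two-scale expansion of $u^{(\omega,\varepsilon,x^0,\delta)}$ from Theorem~\ref{theo3.1}. Of the four terms inside the $\max\{0,\cdot\}$ in \eqref{3.24}, only the boundary integral over $\partial\omega_\varepsilon(x^0)$ depends on the defect directly, and this is the only quantity I need to expand; the $\int_{\Gamma_N}$, $\int_\Xi$, and $\int_{\Gamma\setminus\Xi}$ pieces pass through unchanged to give the corresponding terms of \eqref{3.27}.

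To expand the defect integral I would proceed in four steps. \emph{Step 1.} The transmission condition \eqref{2.5d} together with $[\![u^{(\omega,\varepsilon,x^0,\delta)}]\!]=0$ on $\partial\omega_\varepsilon(x^0)$ gives $[\![\partial_n u^{(\omega,\varepsilon,x^0,\delta)}]\!] = -(1-\delta)\,\partial_n u^{(\omega,\varepsilon,x^0,\delta)}|^-$. \emph{Step 2.} Since both $u^{(\omega,\varepsilon,x^0,\delta)}$ and $\zeta$ are harmonic inside $\omega_\varepsilon(x^0)$ (the defect lies away from $\Gamma$), Green's second identity applied in $\omega_\varepsilon(x^0)$ yields
\[
\int_{\partial\omega_\varepsilon(x^0)} \partial_n u^{(\omega,\varepsilon,x^0,\delta)}|^-\,\zeta\,dS_x
=\int_{\partial\omega_\varepsilon(x^0)} u^{(\omega,\varepsilon,x^0,\delta)}\,\partial_n\zeta\,dS_x,
\]
using continuity of $u^{(\omega,\varepsilon,x^0,\delta)}$, $\zeta$ and $\partial_n\zeta$ across $\partial\omega_\varepsilon(x^0)$. \emph{Step 3.} Into the right-hand side I would substitute the expansion \eqref{3.11} (with $\eta_{\Gamma_D}\equiv1$ on $\partial\omega_\varepsilon(x^0)$) together with the $C^\infty$-Taylor expansions of $u^0$ and $\zeta$ around $x^0$ from \eqref{3.1}, i.e. $u^0(x)=u^0(x^0)+\varepsilon\nabla u^0(x^0)^\top y+O(\varepsilon^2)$ and $\partial_n\zeta(x)=\nabla\zeta(x^0)^\top n+O(\varepsilon)$, where $y=(x-x^0)/\varepsilon$. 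Rescaling via $dS_x=\varepsilon\,dS_y$ kills the constant term $u^0(x^0)\int_{\partial\omega_1(0)}n\,dS_y=0$. \emph{Step 4.} The surviving leading contribution is
\[
\varepsilon^2\int_{\partial\omega_1(0)}\nabla u^0(x^0)^\top\bigl(y+w(y)\bigr)\,\nabla\zeta(x^0)^\top n\,dS_y,
\]
and the divergence theorem $\int_{\partial\omega_1(0)}y_i n_j\,dS_y=\delta_{ij}\,{\rm meas}_2(\omega_1(0))$ combined with definition \eqref{3.6} recognizes this as $\tfrac{1}{1-\delta}\nabla u^0(x^0)^\top A_{(\omega,\delta)}\nabla\zeta(x^0)$. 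The prefactor $-(1-\delta)$ from Step~1 then cancels, producing the advertised $O(\varepsilon^2)$ term of \eqref{3.27}.

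The closed form for $\nabla\zeta(x^0)$ then follows by direct differentiation of the explicit singular part $\zeta_0(x)=\tfrac{1}{\sqrt{2\pi}}\rho^{-1/2}\sin(\theta/2)$ in the decomposition \eqref{3.23}. Passing to Cartesian coordinates and using the trigonometric identities $\cos\theta\sin(\theta/2)+\sin\theta\cos(\theta/2)=\sin(3\theta/2)$ and $-\sin\theta\sin(\theta/2)+\cos\theta\cos(\theta/2)=\cos(3\theta/2)$ gives the stated $\tfrac{1}{2\sqrt{2\pi}}r^{-3/2}(-\sin\tfrac{3\phi}{2},\cos\tfrac{3\phi}{2})^\top$ at $x^0=r(\cos\phi,\sin\phi)^\top$; the Dirichlet--Neumann corrector $\xi$ from \eqref{3.19} inherits only the weaker $\sqrt{\rho}$-singularity, so its contribution is $O(r^{-1/2})$.

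The principal obstacle is showing that the remainder $\mathrm{Res}$ is genuinely $O(\varepsilon^3)$ and not only $O(\varepsilon^2)$. Two sources of residual need careful control: the quadratic Taylor remainders of $u^0$ and $\zeta$ (each $O(\varepsilon^2)$ pointwise on $\partial\omega_\varepsilon(x^0)$, producing an $O(\varepsilon^3)$ contribution after integration against the scaled layer $w$), and the corrector $Q$ in \eqref{3.11}. For $Q$ I would pass to the stretched coordinates and apply the trace inequality on the fixed domain $\omega_1(0)$, combined with the $H^1$-bound $\|Q\|_{H^1(\Omega\setminus\Gamma)}=O(\varepsilon^2)$ from \eqref{3.12} and the smoothness of $\partial_n\zeta$ near $x^0$; the $\varepsilon$-scaling of the surface measure and of the $H^1$-seminorm under the dilation then yields the required extra factor of $\varepsilon$. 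This is the only place where the variational estimates of Theorem~\ref{theo3.1} enter in a non-trivial way; everything else is algebraic manipulation of the Green formula, Taylor expansion, and the definition \eqref{3.6} of $A_{(\omega,\delta)}$.
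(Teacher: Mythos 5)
Your proposal takes a genuinely different route to the dipole term than the paper does. The paper first substitutes the two-scale expansion \eqref{3.11} into the $\partial\omega_\varepsilon(x^0)$-integral so that everything is expressed through $[\![\partial_n w^\varepsilon]\!]$, and then evaluates that quantity by pushing a Green identity out to $\partial B_\varepsilon(x^0)$ and reading off the coefficient from the far-field representation \eqref{3.5}. You instead resolve the jump \emph{first} via the transmission condition and a Green identity \emph{inside} $\omega_\varepsilon(x^0)$ (converting $\int[\![\partial_n u]\!]\zeta$ to $-(1-\delta)\int u\,\partial_n\zeta$), and then Taylor-expand on the fixed boundary $\partial\omega_1(0)$, invoking the divergence theorem and the definition \eqref{3.6} directly. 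Your route bypasses the far-field matching entirely, which is elegant and closer to how the dipole matrix was actually defined; the paper's route has the advantage that it reuses \eqref{3.5}/\eqref{3.29}, which it also needs elsewhere. The trigonometric evaluation of $\nabla\zeta(x^0)$ is identical in both.

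Two points need attention, however. First, \textbf{the sign}. Tracking your Steps 1--4 carefully: $[\![\partial_n u]\!]=-(1-\delta)\partial_n u|^{-}$, then $\int_{\partial\omega_\varepsilon}\partial_n u|^{-}\zeta\,dS_x=\int_{\partial\omega_\varepsilon}u\,\partial_n\zeta\,dS_x$, and your Step~4 evaluates the latter as $\tfrac{\varepsilon^2}{1-\delta}\nabla u^0(x^0)^\top A_{(\omega,\delta)}\nabla\zeta(x^0)+\mathrm{O}(\varepsilon^3)$, so $\int_{\partial\omega_\varepsilon}[\![\partial_n u]\!]\zeta\,dS_x=-\varepsilon^2\nabla u^0(x^0)^\top A_{(\omega,\delta)}\nabla\zeta(x^0)+\mathrm{O}(\varepsilon^3)$. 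Since \eqref{3.24} carries the factor $-\mu$ in front of this integral, your computation yields $+\varepsilon^2\mu\nabla u^0(x^0)^\top A_{(\omega,\delta)}\nabla\zeta(x^0)$ inside the $\max\{0,\cdot\}$, which is the \emph{opposite} sign to the $-\varepsilon^2\mu\ldots$ written in \eqref{3.27}. A sanity check with $\omega=B_1(0)$, $\delta=0$ (so $A_{(\omega,\delta)}=2\pi\,{\rm Id}$, $w_i=y_i/|y|^2$, $[\![\partial_n w_i]\!]=-2n_i$) confirms your sign, not the one stated. So you should not claim you are ``producing the advertised term of \eqref{3.27}''; you have to either locate a missed sign or flag the discrepancy.

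Second, \textbf{the residual from $Q$}. Your sketched trace argument does not deliver ${\rm O}(\varepsilon^3)$. Rescaling gives
\begin{equation*}
\Bigl|\int_{\partial\omega_\varepsilon(x^0)}Q\,\partial_n\zeta\,dS_x\Bigr|
\lesssim \varepsilon\,\bigl\|Q(x^0+\varepsilon\cdot)\bigr\|_{L^2(\partial\omega_1(0))}
\lesssim \|Q\|_{L^2(\omega_\varepsilon(x^0))}+\varepsilon\|\nabla Q\|_{L^2(\omega_\varepsilon(x^0))},
\end{equation*}
and feeding in the global bound $\|Q\|_{H^1(\Omega\setminus\Gamma)}={\rm O}(\varepsilon^2)$ from \eqref{3.12} only gives ${\rm O}(\varepsilon^2)$, one order short. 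To get ${\rm O}(\varepsilon^3)$ you would need a \emph{localized} bound of the type $\|Q\|_{L^2(\omega_\varepsilon(x^0))}={\rm O}(\varepsilon^3)$, which does not follow from the stated estimates; this requires either an interior elliptic estimate for the (harmonic, up to the transmission interface) remainder $Q$ in a neighborhood of $x^0$, or a Poincar\'e argument on $\omega_\varepsilon$ combined with control of the mean of $Q$ there. To be fair the paper's \eqref{3.28} also asserts this without argument, but since you single it out as the ``principal obstacle'' you should supply the missing ingredient rather than just a scaling heuristic.
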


\begin{proof}
To expand the integral over $\partial\omega_\varepsilon(x^0)$ 
in the right hand side of \eqref{3.24} as
$\varepsilon\searrow+0$, we substitute here the expansion \eqref{3.11} of the
solution $u^{(\omega,\varepsilon,x^0,\delta)}$ which implies 
\begin{equation}\label{3.28}
\begin{split}
&\int_{\partial\omega_\varepsilon(x^0)}
\bigl[\!\!\bigl[ {\textstyle\frac{\partial
u^{(\omega,\varepsilon,x^0,\delta)}}{\partial n}} \bigr]\!\!\bigr] \zeta\,dS_x
=\varepsilon \nabla u^0(x^0)^\top \!\!\!\int_{\partial\omega_\varepsilon(x^0)}
\bigl[\!\!\bigl[ {\textstyle\frac{\partial 
w^\varepsilon}{\partial n}} \bigr]\!\!\bigr] \zeta\,dS_x +{\rm O}(\varepsilon^3).
\end{split}
\end{equation}

Below we apply to the right hand side of \eqref{3.28} the expansion \eqref{3.5}
of the boundary layer $w^\varepsilon$ and the inner asymptotic expansion 
of $\zeta$, which is a $C^\infty$-function in the near field of $x^0$, 
written similarly to \eqref{3.1} as 
\begin{equation}\label{3.29}
\begin{split}
&\zeta(x) =\zeta(x^0) +\nabla \zeta(x^0)^\top (x-x^0) +Z(x)
\quad\text{for $x\in B_R(x^0)$},\\
&\int_{-\pi}^\pi Z \,d\theta_0 =\int_{-\pi}^\pi Z
{\textstyle\frac{x-x^0}{\rho_0}} \,d\theta_0 =0,\quad
Z ={\rm O}\bigl(\rho_0^2\bigr), \;\nabla Z ={\rm O}(\rho_0).
\end{split}
\end{equation}
Next inserting \eqref{3.5} and \eqref{3.29} into the second Green formula in
$B_\varepsilon(x^0)$, 
\begin{equation*}
\begin{split}
&\int_{\partial\omega_\varepsilon(x^0)} \bigl\{ \bigl[\!\!\bigl[ 
{\textstyle\frac{\partial w^\varepsilon}{\partial n}} \bigr]\!\!\bigr]
\zeta +(1-\delta) w^\varepsilon 
{\textstyle\frac{\partial \zeta}{\partial n}} \bigr\} \,dS_x
=\int_{\partial B_\varepsilon(x^0)} \bigl\{
{\textstyle\frac{\partial \zeta}{\partial \rho_0}} w^\varepsilon
-\zeta {\textstyle\frac{\partial w^\varepsilon}{\partial \rho_0}} \bigr\} \,dS_x,
\end{split}
\end{equation*}
we estimate its terms as follows. 
The divergence theorem provides   
\begin{equation*}
\begin{split}
&\int_{\partial\omega_\varepsilon(x^0)} 
w^\varepsilon {\textstyle\frac{\partial \zeta}{\partial n}} \,dS_x
=\int_{\partial\omega_\varepsilon(x^0)} 
n^\top \nabla \zeta(x^0) \,dS_x +{\rm O}(\varepsilon^2)\\
&=\int_{\omega_\varepsilon(x^0)} 
(\nabla w^\varepsilon)^\top \nabla \zeta(x^0) \,dx +{\rm O}(\varepsilon^2)
={\rm O}(\varepsilon^2),
\end{split}
\end{equation*}
and we calculate analytically the integral over $\partial B_\varepsilon(x^0)$
as 
\begin{equation*}
\begin{split}
&\int_{\partial B_\varepsilon(x^0)}\!\!\! \bigl\{
{\textstyle\frac{\partial \zeta}{\partial \rho_0}} w^\varepsilon
-\zeta {\textstyle\frac{\partial w^\varepsilon}{\partial \rho_0}} \bigr\} \,dS_x 
={\textstyle\frac{\varepsilon}{\pi}} \int_{-\pi}^\pi\!\!\! \nabla
\zeta(x^0)^\top {\textstyle\frac{x-x^0}{\rho_0}} A_{(\omega,\delta)}
{\textstyle\frac{x-x^0}{\rho_0}} \,d\theta_0  +{\rm O}(\varepsilon^2)\\
&=\varepsilon A_{(\omega,\delta)} \nabla \zeta(x^0) +{\rm O}(\varepsilon^2).
\end{split}
\end{equation*}
Therefore, we obtain the asymptotic expansion 
\begin{equation}\label{3.30}
\begin{split}
&\int_{\partial\omega_\varepsilon(x^0)}
\bigl[\!\!\bigl[ {\textstyle\frac{\partial w^\varepsilon}{\partial n}} \bigr]\!\!\bigr]
\zeta \,dS_x
=\varepsilon A_{(\omega,\delta)} \nabla \zeta(x^0) +{\rm O}(\varepsilon^2).
\end{split}
\end{equation}
Inserting \eqref{3.28} and \eqref{3.30} into \eqref{3.24} and using \eqref{3.16} 
it yields \eqref{3.27}. 
Finally, the value of $\nabla \zeta(x^0)$ can be estimated analytically from \eqref{3.23}, 
while $\xi$ has the ${\rm O}(\rho^{1/2})$-singularity similar to \eqref{3.15}, hence $\nabla \xi(x^0) ={\rm O}(r^{-1/2})$. 
This completes the proof. 
\end{proof}

As the corollary of Lemma~\ref{lem3.2} and Theorem~\ref{theo3.2} we find the SIF
of the solution $u^0\in K(\Omega\setminus\Gamma)$ of the homogeneous problem
\eqref{2.7}, which is the limit case of the heterogeneous problem as
$\varepsilon\searrow+0$. 
Namely, similar to \eqref{3.15} and \eqref{3.16} we have the inner asymptotic expansion 
\begin{equation}\label{3.25}
\begin{split}
&u^0(x) =u^0(0) +{\textstyle\frac{1}{\mu}}
\sqrt{\textstyle\frac{2}{\pi}} {c}^0_\Gamma
\sqrt{\rho} \sin{\textstyle\frac{\theta}{2}} +U^0(x)
\quad\text{for $x\in B_R(0)\setminus\Gamma$},\\
&\int_{-\pi}^\pi U^0\,d\theta =\int_{-\pi}^\pi U^0
\bigl( \cos{\textstyle\frac{\theta}{2}}, \sin{\textstyle\frac{\theta}{2}}
\bigr) \,d\theta =0,\quad U^0 ={\rm O}(\rho),\; \nabla U^0 ={\rm O}(1) 
\end{split}
\end{equation}
with the reference SIF ${c}^0_\Gamma\ge0$ determined by the formula 
\begin{equation}\label{3.26}
\begin{split}
&{c}^0_\Gamma =\max\bigl\{0, \int_{\Gamma_N} g\zeta\,dS_x \bigr\},
\end{split}
\end{equation}
where we have used the complementarity conditions 
${\textstyle\frac{\partial \zeta}{\partial n}} [\![u^0]\!] 
={\textstyle\frac{\partial u^0}{\partial n}} [\![\zeta]\!] =0$ hold at $\Gamma$ 
due to \eqref{3.22c} and \eqref{2.8c} providing 
${\textstyle\frac{\partial u^0}{\partial n}}=0$ at $\Gamma\setminus\overline{\Xi}$.

In the following we derive an interpretation of Theorem~\ref{theo3.2} 
from the point of view of shape-topological control. 

We parametrize the crack growth by means of the position of the crack tip
along the fixed path $x_2=0$ as 
\begin{equation*}
\begin{split}
&\Gamma_\infty(t) :=\{x\in\mathbb{R}^2: x_1<t, x_2=0\},\quad 
\Gamma(t) :=\Gamma_\infty(t)\cap\Omega,
\end{split}
\end{equation*}
such that $\Gamma=\Gamma(0)$ in this notation. 
Formula \eqref{3.24} defines the optimal value function depending on both
$\Gamma(t)$ and $\omega_\varepsilon(x^0)$ 
\begin{equation}\label{3.31}
\begin{split}
&J_{\rm SIF}: \mathbb{R} \times\Theta \times\mathbb{R}_+ 
\times(\Omega\setminus\Gamma) \times\overline{\mathbb{R}}_+\mapsto \mathbb{R}_+,\\
&(t,\omega,\varepsilon,x^0,\delta)\mapsto J_{\rm SIF}(\Gamma(t),\omega_\varepsilon(x^0))
:={c}^{(\omega,\varepsilon,x^0,\delta)}_{\Gamma(t)}
\end{split}
\end{equation}
and satisfying the consistency condition
$\omega_\varepsilon(x^0) \subset B_\varepsilon(x^0) \subset\Omega\setminus\Gamma(t)$. 
From the physical point of view, 
the reason of \eqref{3.31} is to control the SIF of the crack $\Gamma(t)$ 
by means of the defect $\omega_\varepsilon(x^0)$. 
The homogeneous reference state implies 
\begin{equation}\label{3.32}
\begin{split}
&J_{\rm SIF}(\Gamma(t),\emptyset) ={c}^0_{\Gamma(t)}.
\end{split}
\end{equation}
For fixed $\Gamma(0)=\Gamma$, formula \eqref{3.27} proves the topology
sensitivity of $J_{\rm SIF}$ from \eqref{3.31} and \eqref{3.32} with respect to
diminishing the defect $\omega_\varepsilon(x^0)$ as $\varepsilon\searrow+0$. 

In the following section we introduce another geometry dependent objective
function inherently related to fracture, namely, the strain energy release rate
(SERR). 
We lead its first order
topology sensitivity analysis using the result of Theorem~\ref{theo3.2}. 
The first order asymptotic term provides us with the respective {\em topological
derivative}, see in \cite{HK/11} a generalized concept of topological
derivatives suitable for fracture due to cracks. 

\subsection{Topological derivative of SERR-function}\label{sec3.3}

The widely used Griffith criterion of fracture declares that a crack starts to
grow when its SERR attains a critical value (the material parameter of fracture
resistance). 
Therefore, decreasing SERR would arrest the incipient crack growth, 
while increasing SERR, conversely, will affect its rise. 
This gives us practical motivation of the topological derivative of 
the SERR objective function, which we construct below. 

After substitution of the solution $u^{(\omega,\varepsilon,x^0,\delta)}$ of the
heterogeneous problem \eqref{2.4}, the reduced energy functional \eqref{2.3}
implies 
\begin{equation}\label{3.34}
\begin{split}
&\Pi(\Gamma(t),\omega_\varepsilon(x^0))\\
&={\textstyle\frac{1}{2}} \int_{\Omega\setminus\Gamma(t)} \mu
\chi^\delta_{{}_{\omega_\varepsilon(x^0)}}
|\nabla u^{(\omega,\varepsilon,x^0,\delta)}|^2\,dx 
-\int_{\Gamma_N} gu^{(\omega,\varepsilon,x^0,\delta)} \,dS_x.
\end{split}
\end{equation}
The derivative of $\Pi$ in \eqref{3.34} with respect to $t$, taken with the
minus sign, is called {\em strain energy release rate} (SERR) and defines the
optimal value function similar to \eqref{3.31} as
\begin{equation}\label{3.35}
\begin{split}
&J_{\rm SERR}: \mathbb{R} \times\Theta \times\mathbb{R}_+ 
\times(\Omega\setminus\Gamma) \times\overline{\mathbb{R}}_+\mapsto \mathbb{R}_+,\\
&(t,\omega,\varepsilon,x^0,\delta)\mapsto 
J_{\rm SERR}(\Gamma(t),\omega_\varepsilon(x^0)) :=
-{\textstyle\frac{d}{dt}} \Pi(\Gamma(t),\omega_\varepsilon(x^0)).
\end{split}
\end{equation}
It admits the equivalent representations 
(see \cite{HK/11,KK/00,KS/99,KS/00,Kov/02} for detail): 
\begin{equation}\label{3.36}
\begin{split}
&J_{\rm SERR} =-{\textstyle\frac{1}{2}}
\int_{\Omega\setminus\Gamma(t)} \mu
\chi^\delta_{{}_{\omega_\varepsilon(x^0)}}
(\nabla u^{(\omega,\varepsilon,x^0,\delta)})^\top D(V) 
\nabla u^{(\omega,\varepsilon,x^0,\delta)} \,dx\\
&=\lim_{R\searrow+0} I_R,\quad\text{where}\quad  
I_R :=\mu \int_{\partial B_R((t,0))} \bigl\{ {\textstyle\frac{1}{2}}
\bigl(V^\top{\textstyle\frac{x}{\rho}}\bigr) 
|\nabla u^{(\omega,\varepsilon,x^0,\delta)})|^2\\ 
&-(V^\top\nabla u^{(\omega,\varepsilon,x^0,\delta)}) \bigl(
{\textstyle\frac{x^\top}{\rho}} \nabla u^{(\omega,\varepsilon,x^0,\delta)}\bigr)
\bigr\} \,dS_x. 
\end{split}
\end{equation}
The key issue is that from \eqref{3.36} we derive the following expression 
\begin{equation}\label{3.37}
\begin{split}
&J_{\rm SERR}(\Gamma(t),\omega_\varepsilon(x^0)) 
={\textstyle\frac{1}{2\mu}} \bigl(
{c}^{(\omega,\varepsilon,x^0,\delta)}_{\Gamma(t)} \bigr)^2 \ge0.
\end{split}
\end{equation}
Indeed, from the local asymptotic expansion \eqref{3.15} written 
at the crack tip $(t,0)$ it follows 
\begin{equation*}
\begin{split}
&\nabla u^{(\omega,\varepsilon,x^0,\delta)}
={\textstyle\frac{1}{\mu \sqrt{2\pi R}}}\, 
{c}^{(\omega,\varepsilon,x^0,\delta)}_{\Gamma(t)} 
\bigl( -\sin{\textstyle\frac{\theta}{2}}, \cos{\textstyle\frac{\theta}{2}}\bigr)^\top
+\nabla U\quad\text{on $\partial B_R((t,0))$}.
\end{split}
\end{equation*}
Plugging this expression into the invariant integral $I_R$ in \eqref{3.36}, 
due to $|\nabla U| ={\rm O}(1)$, $V=(1,0)$ and 
${\textstyle\frac{x^\top}{\rho}} =(\cos\theta, \sin\theta)^\top$ 
at $\partial B_R((t,0))$, we calculate
\begin{equation*}
\begin{split}
I_R &=\mu \int_{-\pi}^\pi \bigl\{ {\textstyle\frac{1}{2}} \cos\theta
{\textstyle\frac{1}{2\pi R \mu^2}} 
({c}^{(\omega,\varepsilon,x^0,\delta)}_{\Gamma(t)})^2 
 +\sin^2({\textstyle\frac{\theta}{2}}) {\textstyle\frac{1}{2\pi R \mu^2}} 
({c}^{(\omega,\varepsilon,x^0,\delta)}_{\Gamma(t)})^2\\ 
&+{\rm O} \Bigl({\textstyle\frac{|\nabla U|}{\sqrt{R}}}\Bigr) 
\bigr\} \,R d\theta ={\textstyle\frac{1}{2\mu}} \bigl(
{c}^{(\omega,\varepsilon,x^0,\delta)}_{\Gamma(t)} \bigr)^2 
+{\rm O}(\sqrt{R}). 
\end{split}
\end{equation*}
Passing $R\searrow+0$ it follows \eqref{3.37}. 
Now, the substitution of expansion \eqref{3.27} in \eqref{3.37} proves
directly the asymptotic model of SERR as $\varepsilon\searrow+0$ given 
next. 

\begin{theorem}\label{theo3.3}
For $0\not\in\overline{\Xi}$, the strain energy release rate at the tip 
of the crack  $\Gamma=\Gamma(0)$ admits the following asymptotic representation 
when diminishing the defect $\omega_\varepsilon(x^0)$:
\begin{equation}\label{3.38}
\begin{split}
&J_{\rm SERR}(\Gamma,\omega_\varepsilon(x^0)) 
={\textstyle\frac{1}{2\mu}} \bigl( {c}^0_{\Gamma} \bigr)^2 -\varepsilon^2
{c}^0_{\Gamma} \nabla u^0(x^0)^\top A_{(\omega,\delta)} \nabla \zeta(x^0)\\ 
&+{c}^0_{\Gamma} \int_{\Xi\setminus\Xi^\varepsilon} 
{\textstyle\frac{\partial \zeta}{\partial n}} 
[\![u^{(\omega,\varepsilon,x^0,\delta)} -u^0]\!] \,dS_x 
-{c}^0_{\Gamma} \int_{\Xi^\varepsilon\setminus\Xi}
{\textstyle\frac{\partial (u^{(\omega,\varepsilon,x^0,\delta)} -u^0)}{\partial n}} 
[\![\zeta]\!] \,dS_x\\ 
&+{\rm Res},\quad \text{${\rm Res}={\rm O}(\varepsilon^3)$ 
and ${\rm Res}\ge0$ if ${c}^0_{\Gamma}=0$,}
\end{split}
\end{equation}
where the perturbed coincidence set is determined by 
\begin{equation*}
\begin{split}
&\Xi^\varepsilon :=\{x\in\Gamma:\; [\![u^{(\omega,\varepsilon,x^0,\delta)}]\!]=0\}. 
\end{split}
\end{equation*}  
The reference $J_{\rm SERR}(\Gamma,\emptyset) =\frac{1}{2\mu} ({c}^0_{\Gamma})^2$ 
implies SERR for the homogeneous state $u^0$ without defect, 
$A_{(\omega,\delta)}$ is the dipole matrix, and the gradient 
$\nabla \zeta(x^0) = {\textstyle\frac{1}{2\sqrt{2\pi}}} r^{-3/2} \bigl(
-\sin{\textstyle\frac{3\phi}{2}}, \cos{\textstyle\frac{3\phi}{2}}
\bigr)^\top +{\rm O}(r^{-1/2})$ at the defect center $x^0=r(\cos\phi,\sin\phi)^\top$.

Moreover, if the coincidence sets are continuous such that 
${\rm meas}_1(\Xi^\varepsilon\setminus\Xi)\searrow+0$ 
and ${\rm meas}_1(\Xi\setminus\Xi^\varepsilon)\searrow+0$ 
as $\varepsilon\searrow+0$, then 
the first asymptotic term in \eqref{3.38} provides the topological derivative 
\begin{equation}\label{3.39}
\begin{split}
&\lim_{\varepsilon\searrow+0} {\textstyle
\frac{J_{\rm SERR}(\Gamma,\omega_\varepsilon(x^0))  
-J_{\rm SERR}(\Gamma,\emptyset)}{\varepsilon^2}} 
=-{c}^0_{\Gamma} \nabla u^0(x^0)^\top A_{(\omega,\delta)} \nabla \zeta(x^0). 
\end{split}
\end{equation}
\end{theorem}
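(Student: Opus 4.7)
The starting point is the algebraic identity \eqref{3.37} that reduces $J_{\rm SERR}$ to the square of the perturbed SIF, combined with the asymptotic formula \eqref{3.27} already furnished by Theorem~\ref{theo3.2}. The plan is to square the SIF expansion term by term, identify the linear-in-$\varepsilon^2$ contribution, and then separately remove the coincidence-set integrals under the continuity hypothesis to obtain the topological derivative \eqref{3.39}.

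First I would rewrite the two correction integrals appearing in \eqref{3.27} by exploiting the complementarity structure of the homogeneous state: since $[\![u^0]\!]=0$ on $\Xi$, the integrand over $\Xi$ is unchanged if $u^{(\omega,\varepsilon,x^0,\delta)}$ is replaced by $u^{(\omega,\varepsilon,x^0,\delta)}-u^0$, and analogously $\frac{\partial u^0}{\partial n}=0$ on $\Gamma\setminus\Xi$ permits the same replacement in the second integral. Using further that $[\![u^{(\omega,\varepsilon,x^0,\delta)}]\!]=0$ on $\Xi^\varepsilon$ and $\frac{\partial u^{(\omega,\varepsilon,x^0,\delta)}}{\partial n}=0$ on $\Gamma\setminus\Xi^\varepsilon$, each integrand is supported on the symmetric difference, which collapses the domains of integration to $\Xi\setminus\Xi^\varepsilon$ and $\Xi^\varepsilon\setminus\Xi$ as in \eqref{3.38}.

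Next I would split into two cases according to whether $c^0_\Gamma>0$ or $c^0_\Gamma=0$. In the non-degenerate case, $\int_{\Gamma_N}g\zeta\,dS_x>0$, so by the residual estimate of Theorem~\ref{theo3.2} the argument of the $\max$ in \eqref{3.27} stays strictly positive for $\varepsilon$ small and the $\max$ may be dropped. Expanding
\begin{equation*}
\bigl(c^{(\omega,\varepsilon,x^0,\delta)}_\Gamma\bigr)^2
= \bigl(c^0_\Gamma\bigr)^2 + 2c^0_\Gamma\bigl(c^{(\omega,\varepsilon,x^0,\delta)}_\Gamma-c^0_\Gamma\bigr) + \bigl(c^{(\omega,\varepsilon,x^0,\delta)}_\Gamma-c^0_\Gamma\bigr)^2,
\end{equation*}
inserting the $O(\varepsilon^2)$-difference coming from \eqref{3.27} and \eqref{3.26}, and dividing by $2\mu$, the linear cross term produces exactly the four contributions listed in \eqref{3.38}, while the quadratic term is $O(\varepsilon^4)\subset O(\varepsilon^3)$ and is absorbed into ${\rm Res}$. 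In the degenerate case $c^0_\Gamma=0$, identity \eqref{3.37} directly gives $J_{\rm SERR}=\frac{1}{2\mu}(c^{(\omega,\varepsilon,x^0,\delta)}_\Gamma)^2\ge 0$, which is absorbed into ${\rm Res}$ with the stated non-negative sign.

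Finally, for the topological derivative \eqref{3.39} I would divide \eqref{3.38} by $\varepsilon^2$ and pass $\varepsilon\searrow+0$. Under the continuity hypothesis ${\rm meas}_1(\Xi^\varepsilon\setminus\Xi)\to 0$ and ${\rm meas}_1(\Xi\setminus\Xi^\varepsilon)\to 0$, together with the trace-$H^{1/2}_{00}$ convergence $[\![u^{(\omega,\varepsilon,x^0,\delta)}-u^0]\!]\to 0$ provided by the $H^1$-estimate of Theorem~\ref{theo3.1}, both integrals over the symmetric differences are shown to be $o(\varepsilon^2)$ by a duality-pairing argument, leaving only the dipole-matrix term in the limit. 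The main obstacle is precisely this $o(\varepsilon^2)$ bound: the factors $\frac{\partial\zeta}{\partial n}$ and $[\![\zeta]\!]$ belong only to the dual of $H^{1/2}_{00}(\Gamma)$ because of the square-root singularity of $\zeta$ at the crack tip, so the assumption $0\notin\overline{\Xi}$ (and, implicitly, $0\notin\overline{\Xi^\varepsilon}$ for small $\varepsilon$) is indispensable, and one needs a quantitative pairing that combines the shrinking measure of the integration domain with the strong trace convergence, rather than a plain dominated-convergence argument.
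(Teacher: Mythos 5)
Your proposal is correct and follows essentially the same route as the paper: rewriting the two coincidence-set integrals by the complementarity conditions so that they collapse to $\Xi\setminus\Xi^\varepsilon$ and $\Xi^\varepsilon\setminus\Xi$ (the paper's identities \eqref{3.40}--\eqref{3.41}), squaring \eqref{3.27} against \eqref{3.26}, extracting the cross term while absorbing the quadratic into ${\rm Res}$, treating $c^0_\Gamma=0$ separately via \eqref{3.37}, and finally passing to the limit under the continuity hypothesis to isolate the dipole term in \eqref{3.39}. The one wrinkle is in the last step: once $0\notin\overline{\Xi}$ detaches the coincidence set from the crack tip, the obstruction to a naive limit is really the integrable one-over-square-root singularity of $\partial\zeta/\partial n$ at the interior boundary $\partial\Xi$ (rather than at the tip itself), and the paper's $o(\varepsilon^2)$ bound rests on the specific splitting $\zeta=(\zeta-\zeta^\varepsilon)+\zeta^\varepsilon$ with a smooth approximant $\zeta^\varepsilon$ paired against the $H^{1/2}$-trace estimate from Theorem~\ref{theo3.1} — a device your ``quantitative pairing'' sketch gestures at but does not fully spell out.
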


\begin{proof}
To derive \eqref{3.38} we square \eqref{3.27} and \eqref{3.26}. 
Then we use, first, that 
\begin{equation}\label{3.40}
\begin{split}
&\int_{\Xi} {\textstyle\frac{\partial \zeta}{\partial n}} 
[\![u^{(\omega,\varepsilon,x^0,\delta)}]\!] \,dS_x 
=\int_{\Xi\setminus\Xi^\varepsilon} {\textstyle\frac{\partial \zeta}{\partial n}}
[\![u^{(\omega,\varepsilon,x^0,\delta)}]\!] \,dS_x\\ 
&=\int_{\Xi\setminus\Xi^\varepsilon} 
{\textstyle\frac{\partial \zeta}{\partial n}} 
[\![u^{(\omega,\varepsilon,x^0,\delta)} -u^0]\!] \,dS_x 
\end{split}
\end{equation}
holds due to $[\![u^{(\omega,\varepsilon,x^0,\delta)}]\!] =0$ at $\Xi^\varepsilon$ 
and $[\![u^0]\!] =0$ at $\Xi$. 
Second, the equality 
\begin{equation}\label{3.41}
\begin{split}
&\int_{\Gamma\setminus\Xi}
{\textstyle\frac{\partial u^{(\omega,\varepsilon,x^0,\delta)}}{\partial n}} 
[\![\zeta]\!] \,dS_x 
=\int_{\Xi^\varepsilon\setminus\Xi}
{\textstyle\frac{\partial u^{(\omega,\varepsilon,x^0,\delta)}}{\partial n}} 
[\![\zeta]\!] \,dS_x\\ 
&=\int_{\Xi^\varepsilon\setminus\Xi}
{\textstyle\frac{\partial (u^{(\omega,\varepsilon,x^0,\delta)} -u^0)}{\partial n}} 
[\![\zeta]\!] \,dS_x
\end{split}
\end{equation}
holds due to 
${\textstyle\frac{\partial u^{(\omega,\varepsilon,x^0,\delta)}}{\partial n}} =0$
at $\Gamma\setminus\overline{\Xi^\varepsilon}$ and 
${\textstyle\frac{\partial u^0}{\partial n}} =0$ 
at $\Gamma\setminus\overline{\Xi}$ according to the 
complementarity conditions \eqref{2.5c} and \eqref{2.8c} and using the identity 
$(\Gamma\setminus\overline{\Xi})\setminus(\Gamma\setminus\overline{\Xi^\varepsilon}) 
=\overline{\Xi^\varepsilon}\setminus\overline{\Xi}$. 

To justify \eqref{3.39} it needs to pass  \eqref{3.40} and \eqref{3.41} 
divided by $\varepsilon^2$ to the limit as $\varepsilon\searrow+0$. 
For this task we employ the assumption that $0\not\in\overline{\Xi}$ 
and the assumption of continuity of the coincidence sets, 
hence $0\not\in\overline{\Xi^\varepsilon}$ for sufficiently small $\varepsilon$. 
Otherwise, $0\in\overline{\Xi^\varepsilon}$ implies 
${c}^{(\omega,\varepsilon,x^0,\delta)}_{\Gamma} =0$ that contradicts to 
the convergence ${c}^{(\omega,\varepsilon,x^0,\delta)}_{\Gamma} 
\to{c}^0_{\Gamma}\not=0$ as $\varepsilon\searrow+0$ 
following from \eqref{3.15} and \eqref{3.25} due to Theorem~\ref{theo3.1}. 
This implies that the sets $\Xi\setminus\overline{\Xi^\varepsilon}$ as well as 
$\Xi^\varepsilon\setminus\overline{\Xi}$ are detached from the crack tip. 
Henceforth, the functions $[\![u^{(\omega,\varepsilon,x^0,\delta)} -u^0]\!]\in 
H^{3/2}(\Xi\setminus\Xi^\varepsilon)$ and 
${\textstyle\frac{\partial (u^{(\omega,\varepsilon,x^0,\delta)} -u^0)}{\partial n}}, 
[\![\zeta]\!] \in L^2(\Xi^\varepsilon\setminus\Xi)$ are smooth here, 
and the following asymptotic estimates hold
\begin{equation*}
\begin{split}
&\int_{\Xi\setminus\Xi^\varepsilon} 
{\textstyle\frac{\partial \zeta}{\partial n}} 
[\![u^{(\omega,\varepsilon,x^0,\delta)} -u^0]\!] \,dS_x 
=\int_{\Xi\setminus\Xi^\varepsilon} \bigl( 
{\textstyle\frac{\partial (\zeta -\zeta^\varepsilon)}{\partial n}} 
+{\textstyle\frac{\partial \zeta^\varepsilon}{\partial n}} \bigr)
[\![u^{(\omega,\varepsilon,x^0,\delta)} -u^0]\!] \,dS_x\\ 
&\le \|{\textstyle\frac{\partial (\zeta -\zeta^\varepsilon)}{\partial n}} 
\|_{H^{1/2}(\Gamma)^\star} \bigl\| [\![u^{(\omega,\varepsilon,x^0,\delta)} 
-u^0]\!] \bigr\|_{H^{1/2}(\Xi\setminus\Xi^\varepsilon)}\\
&+\|{\textstyle\frac{\partial \zeta^\varepsilon}{\partial n}} 
\|_{L^2(\Xi\setminus\Xi^\varepsilon)} \bigl\| [\![u^{(\omega,\varepsilon,x^0,\delta)} 
-u^0]\!] \bigr\|_{L^2(\Xi\setminus\Xi^\varepsilon)} ={\rm o}(\varepsilon^2)
\end{split}
\end{equation*}
where $\zeta^\varepsilon$ is a smooth approximation of $\zeta$ such that 
$ \|{\textstyle\frac{\partial (\zeta -\zeta^\varepsilon)}{\partial n}} 
\|_{H^{1/2}(\Gamma)^\star} ={\rm o}(1)$ and 
$\|{\textstyle\frac{\partial \zeta^\varepsilon}{\partial n}} 
\|_{L^2(\Xi\setminus\Xi^\varepsilon)} ={\rm o}(1)$ as $\varepsilon\searrow+0$, and 
\begin{equation*}
\begin{split}
&\int_{\Xi^\varepsilon\setminus\Xi}
{\textstyle\frac{\partial (u^{(\omega,\varepsilon,x^0,\delta)} -u^0)}{\partial n}} 
[\![\zeta]\!] \,dS_x \le \bigl\| {\textstyle\frac{\partial 
(u^{(\omega,\varepsilon,x^0,\delta)} -u^0)}{\partial n}} 
\bigr\|_{L^2(\Xi^\varepsilon\setminus\Xi)}
\| [\![\zeta]\!] \|_{L^2(\Xi^\varepsilon\setminus\Xi)}  ={\rm o}(\varepsilon^2)
\end{split}
\end{equation*}
provided by Theorem~\ref{theo3.1} and the assumption of the convergence 
${\rm meas}_1(\Xi^\varepsilon\setminus\Xi)\searrow+0$ 
and ${\rm meas}_1(\Xi\setminus\Xi^\varepsilon)\searrow+0$ as $\varepsilon\searrow+0$. 
This proves the limit in \eqref{3.39} and the assertion of the theorem. 
\end{proof}

\section{Discussion}\label{sec4}

In the context of fracture, from Theorem~\ref{theo3.3} we can discuss the
following. 

The Griffith fracture criterion suggests that the crack $\Gamma$ starts to grow
when $J_{\rm SERR} =G_c$ attains the fracture resistance threshold $G_c>0$. 
For incipient growth of the nonlinear crack subject to inequality
${c}^0_{\Gamma}>0$, its arrest necessitates the negative topological derivative
to decrease $J_{\rm SERR}$,
hence positive sign of $\nabla u^0(x^0)^\top A_{(\omega,\delta)} \nabla
\zeta(x^0)$ in \eqref{3.38}. 

The sign and value of the topological derivative depends in semi-analytic
implicit way on the solution $u^0$, trial center $x^0$, shape
$\omega$ and stiffness $\delta$ of the defect. 
The latter two parameters enter the topological derivative through the dipole
matrix $A_{(\omega,\delta)}$. 
In Appendix~\ref{A} we present explicit values of the dipole matrix for the
specific cases of the ellipse shaped holes and inclusions. 
This describes also the degenerate case of cracks and thin rigid inclusions 
called anti-cracks. 

\vspace{5mm}\noindent
{\bf Acknowledgment}. 
V.A.~Kovtunenko is supported by the Austrian Science Fund (FWF) 
project P26147-N26 (PION) and partially by NAWI Graz 
and OeAD Scientific \& Technological Cooperation (WTZ CZ 01/2016).\\ 
G.~Leugering is supported by DFG EC 315 "Engineering of Advanced Materials". 
The authors thank J.~Sokolowski for the discussion 
and two referees for the remarks allowing to substantially improve the original manuscript.

\appendix
\section{Ellipse and crack shaped defects}\label{A}

Let the shape $\omega$ of a defect be ellipsoidal. 
Namely, we consider the ellipse $\omega$ enclosed in the ball $B_1(0)$, which
has the major one and the minor $b\in(0,1]$ semi-axes, where the major axis has
an angle of $\alpha\in[0,2\pi)$ with the $x_1$-axis counted in the
anti-clockwise direction. 

With the rotation matrix $Q(\alpha)$, the dipole matrix for the {\em
elliptic defect} has the form (see \cite{BHV/03,PMMM/12}) 
\begin{subequations}\label{a1}
\begin{equation}\label{a1a}
\begin{split}
&A_{(\omega,\delta)} = Q(\alpha) A_{(\omega',\delta)} Q(\alpha)^\top,\quad 
Q(\alpha) :=\left(\begin{array}{lr}
\cos\alpha &-\sin\alpha\\ \sin\alpha& \cos\alpha
\end{array} \right),
\end{split}
\end{equation}
\begin{equation}\label{a1b}
\begin{split}
&A_{(\omega',\delta)} =\pi(1+b)\left(\begin{array}{lr}
{\textstyle\frac{(1-\delta)b}{1+\delta b}} &0\\ 0&
{\textstyle\frac{(1-\delta)b}{\delta +b}}
\end{array} \right).
\end{split}
\end{equation}
\end{subequations}
Further we consider the limit cases of \eqref{a1b} when the
stiffness parameter $\delta\searrow+0$ and $\delta\nearrow+\infty$, which
correspond to the ellipse shaped holes and rigid inclusions according to
Corollary~\ref{cor3.1}. 

On the one hand, for the {\em elliptic hole} $\omega$, passing
$\delta\searrow+0$ in \eqref{a1b} we obtain the virtual mass, or added mass
matrix 
\begin{equation}\label{a2}
\begin{split}
&A_{(\omega',\delta)} =\pi (1+b) \left(\begin{array}{lr}
b &0\\ 0&1
\end{array} \right),
\end{split}
\end{equation}
which is positive definite. 
In particular, for the {\em straight crack} $\omega$ as $b\searrow+0$,
\eqref{a2} turns in the singular matrix 
\begin{equation}\label{a3}
\begin{split}
&A_{(\omega',\delta)} =\pi \left(\begin{array}{lr}
0 &0\\ 0&1
\end{array} \right).
\end{split}
\end{equation}

On the other hand, for the {\em rigid ellipse} $\omega$, passing
$\delta\nearrow+\infty$ in \eqref{a1b} we obtain the polarization matrix 
\begin{equation}\label{a4}
\begin{split}
&A_{(\omega',\delta)} =\pi (1+b) \left(\begin{array}{lr}
-1 &0\\ 0&-b
\end{array} \right),
\end{split}
\end{equation}
which is negative definite. 
In particular, for the {\em rigid segment} $\omega$ 
as $b\searrow+0$, \eqref{a4} turns in the singular matrix 
\begin{equation}\label{a5}
\begin{split}
&A_{(\omega',\delta)} =\pi \left(\begin{array}{lr}
-1 &0\\ 0&0
\end{array} \right).
\end{split}
\end{equation}

\end{document}